\newtheorem{theorem}{Theorem}[section]
\newtheorem{remark}[theorem]{Remark}
\newtheorem{proposition}[theorem]{Proposition}
\newtheorem{lemma}[theorem]{Lemma}
 \numberwithin{equation}{section}
\begin{document}

\title
{Hyperbolicity of the time-like extremal surfaces in Minkowski spaces
}

\author{
Xianglong Duan}
\date\today

\subjclass{}

\keywords{extremal surfaces, hyperbolic system of conservation laws, conservation laws with
polyconvex entropy}

\address{
CNRS UMR 7640 \\ 
Ecole Polytechnique   \\ 
Palaiseau\\
France}

\email{xianglong.duan@polytechnique.edu}

\begin{abstract}

In this paper, it is established, in the case of graphs, that time-like extremal surfaces of dimension $1+n$
in the Minkowski space of dimension $1+n+m$ can be described by a symmetric
hyperbolic system of PDEs with the very simple structure (reminiscent of the inviscid
Burgers equation)
$$
 \partial_t W + \sum_{j=1}^n A_j(W)\partial_{x_j} W =0,\;\;\;W:\;(t,x)\in\mathbb{R}^{1+n}
\rightarrow W(t,x)\in\mathbb{R}^{n+m+\binom{m+n}{n}},
$$
where each $A_j(W)$ is just a $\big(n+m+\binom{m+n}{n}\big)\times\big(n+m+\binom{m+n}{n}\big)$ symmetric matrix depending
linearly on $W$.

\end{abstract}
\maketitle


\section*{Introduction}



In the $(1+n+m)-$dimensional Minkowski space $\mathbb{R}^{1+(n+m)}$, we consider a time-like
$(1+n)-$dimensional surface (called $n-$brane in String Theory \cite{Po}), namely,
$$
(t,x)\in\overline\Omega \subset{\mathbb{R}\times\mathbb{R}^n}\rightarrow X(t,x)
=(X^0(t,x),\ldots,X^{n+m}(t,x))\in
\mathbb{R}^{1+(n+m)},
$$
where $\Omega$ is a bounded open set.
This surface is called an extremal surface if $X$
is a critical point,
with respect to compactly supported perturbations in the open set $\Omega$,
of the following area functional (which is the Nambu-Goto action in the case $n=1$)
$$
-\iint_\Omega  \sqrt{-\det (G_{\mu\nu})\;\;},
\;\;\;\;\;
G_{\mu\nu}
=\eta_{MN}\partial_{\mu}X^{M}\partial_{\nu}X^{N}\;,
$$
where $M,N=0,1,\ldots,n+m$,\;\; $\mu,\nu=0,1,\ldots,n$, and $\eta=(-1,1,\ldots,1)$ denotes the Minkowski metric, while $G$ is the induced metric on the $(1+n)-$surface by $\eta$.
Here $\partial_0=\partial_t$ and we use the convention that the sum is taken for repeated indices.


By variational principles, the Euler-Lagrange equations gives the well-known
equations of extremal surfaces,
\begin{equation}\label{eq:NG}
\partial_{\mu}\left(\sqrt{-G}G^{\mu\nu}
\partial_{\nu}X^{M}\right)=0,\qquad M=0,1,\ldots,n+m,
\end{equation}
where $G^{\mu\nu}$ is the inverse of $G_{\mu\nu}$ and $G=\det (G_{\mu\nu})$.
In this paper, we limit ourself to
the case of extremal surfaces that are graphs of the form:
\begin{equation}\label{graph}
X^0=t,\;\;X^{i}=x^i,\;\;i=1,\ldots,n,\;\;X^{n+\alpha}=X^{n+\alpha}(t,x),\;\;\alpha=1,\ldots,m
\end{equation}

The main purpose of this paper is to prove:
\begin{theorem}
\label{main}
In the case of a graph as (\ref{graph})
the equations of extremal surfaces (\ref{eq:NG})
can be translated into a
first order symmetric hyperbolic
system of
PDEs,
which admits the
very simple form
\begin{equation}\label{burgers}
\partial_t W + \sum_{j=1}^n A_j(W)\partial_{x_j} W =0,\;\;\;W:(t,x)\in\mathbb{R}^{1+n}
\rightarrow W(t,x)\in\mathbb{R}^{n+m+\binom{m+n}{n}},
\end{equation}
where
each $A_j(W)$ is just a $(n+m+\binom{m+n}{n})\times(n+m+\binom{m+n}{n})$ symmetric
matrix depending linearly on $W$.
Accordingly, this system is automatically well-posed, locally in time, in the Sobolev space
$W^{s,2}$ as soon as $s>n/2+1$.

\end{theorem}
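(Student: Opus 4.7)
The plan is to recast the Euler--Lagrange system \eqref{eq:NG} as a first-order symmetric hyperbolic system by augmenting the natural unknowns---the transverse velocities $v^\alpha=\partial_t X^{n+\alpha}$ and the deformation gradients $F^\alpha_j=\partial_j X^{n+\alpha}$---with a family of algebraic minors of the spatial Jacobian, so that the Nambu--Goto action lifts to a strictly convex entropy whose associated conservation laws have quadratic fluxes. Under the graph ansatz a direct computation yields
$$
G_{00}=-1+\sum_\alpha (v^\alpha)^2,\qquad G_{0j}=\sum_\alpha v^\alpha F^\alpha_j,\qquad G_{ij}=\delta_{ij}+\sum_\alpha F^\alpha_i F^\alpha_j,
$$
so that $-\det G>0$ defines the time-like cone; Cauchy--Binet applied to $G=J^T\eta J$ expresses $-\det G$ as a quadratic polynomial in the $n\times n$ minors of the reduced spatial Jacobian $\binom{I_n}{F}$, together with $v$. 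This is exactly the polyconvex structure exploited by Brenier and by the Dafermos--Demoulini--Stuart--Tzavaras school in nonlinear elastodynamics.

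The next step is to enlarge the state. Take $W\in\mathbb{R}^{n+m+\binom{m+n}{n}}$ to consist of the $m$ velocities $v^\alpha$, of $n$ auxiliary slots encoding the spatial structure (related to cofactor rows or to $G_{0j}$-type quantities), and of the $\binom{m+n}{n}$ minors obtained by selecting $n$ rows out of the $n+m$ rows of $\binom{I_n}{F}$; the Vandermonde identity $\sum_k\binom{n}{k}\binom{m}{n-k}=\binom{m+n}{n}$ gives this last count. For every such minor $\Pi$, differentiation in $t$ combined with the compatibility $\partial_t F^\alpha_j=\partial_j v^\alpha$ and Laplace expansion yields a conservation law $\partial_t\Pi+\partial_j(v^\alpha\Pi')=0$ in which $\Pi'$ is a lower-order minor; the flux is thus quadratic in $W$. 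The evolution for $v^\alpha$ is the $M=n+\alpha$ component of \eqref{eq:NG} in divergence form, with momentum $p^\alpha=\partial_{v^\alpha}\sqrt{-\det G}$ itself expressible through the minors. Together these give a closed quasilinear conservation system whose fluxes are quadratic polynomials in $W$.

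To symmetrize, I would observe that $-\sqrt{-\det G}$, re-expressed in the augmented coordinates, becomes a strictly convex function $\mathcal{E}(W)$ on the admissible cone: this is precisely the polyconvex lift of a functional of $(v,F)$ that is not convex in those original variables. Standard polyconvex theory then produces an associated entropy flux, and multiplying the conservation laws by the Hessian of $\mathcal{E}$ puts the system in the symmetric form \eqref{burgers}; the quadratic dependence of the fluxes on $W$ makes each matrix $A_j(W)$ linear in $W$. Local existence in $H^s(\mathbb{R}^n)$ for $s>n/2+1$ is then an immediate consequence of the Friedrichs--Kato theory of symmetric hyperbolic systems, the exponent being that of the Sobolev embedding $H^{s-1}\hookrightarrow L^\infty$ which ensures Lipschitz coefficients.

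The main obstacle is closing the minor algebra: one has to check that the time evolution of every minor appearing in $W$ is a spatial divergence whose flux is already a quadratic polynomial in the components of $W$, with no cubic or higher monomials creeping in. This rests on the Piola identity (divergence-free rows of the cofactor matrix) together with judicious Laplace expansions, and the precise selection of which $\binom{m+n}{n}$ minors to include is what makes the algebra close and the symmetrized coefficients exactly linear in $W$. One also has to verify that the minor constraints and the curl-free condition $\partial_i F^\alpha_j=\partial_j F^\alpha_i$ propagate along \eqref{burgers}, so that a solution of the augmented system with compatible initial data projects back to a solution of the original extremal-surface equations.
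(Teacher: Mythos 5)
Your first half follows the paper's strategy quite closely (graph ansatz, Cauchy--Binet, polyconvexity in the minors, augmentation of the state by all minors, propagation of the curl-free and algebraic constraints), but the decisive step --- how one actually arrives at the announced form $\partial_t W+\sum_j A_j(W)\partial_{x_j}W=0$ with each $A_j$ \emph{symmetric and linear} in $W$ --- is not achieved by the argument you propose. Multiplying the conservation laws by the Hessian of a convex entropy (Godunov/Friedrichs--Lax symmetrization) produces a system of the form $A_0(W)\partial_t W+\sum_j \tilde A_j(W)\partial_{x_j}W=0$ with $A_0=D^2\mathcal{E}$ symmetric positive definite but not the identity, and with coefficients that are in general rational, not linear, in $W$: here the convex entropy of the augmented system is $S=(1+|D|^2+|P|^2+\sum_{A,I}M_{A,I}^2)/(2h)$, whose Hessian is a genuinely nonlinear function of the conserved quantities. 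Moreover your premise that the fluxes are quadratic polynomials in the conserved variables is not correct: in the augmented conservative system the fluxes are quadratic expressions \emph{divided by} $h$ (e.g.\ $M_{A,I}M_{A\setminus\{\alpha\},I\setminus\{i\}}/h$, $P_iP_j/h$), and if instead you keep the velocities $V_\alpha$ as unknowns, their evolution equation is not in conservation form with quadratic flux at all (it carries the full Legendre/Hamiltonian nonlinearity). So neither reading of your state vector delivers matrices that are simultaneously symmetric and linear in $W$.

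What the paper actually does at this point is a specific change of dependent variables rather than an entropy symmetrization: after the Legendre transform to the momenta $D_\alpha$ and the introduction of the energy $h$ and momentum density $P_i$ (with their own conservation laws), one sets $\tau=1/h$, $d=D/h$, $v=P/h$, $m_{A,I}=M_{A,I}/h$, and verifies by direct computation --- using the differential constraints $\sum_{i\in I}(-1)^{O_I(i)}\partial_i(M_{A',I\setminus\{i\}})=0$ --- that $(\tau,d,v,m_{A,I})$ satisfies a quasilinear system that is already in the form \eqref{burgers} with symmetric, $W$-linear coefficients; this structure is special (in the spirit of the augmented Born--Infeld system) and does not follow from general polyconvex/entropy theory. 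Note also that this is what fixes your dimension count: the slot corresponding to the trivial order-zero minor is not the constant $1$ but the nontrivial unknown $\tau=1/h$, so the energy variable is indispensable, whereas in your proposal $h$ never appears and one component of $W$ would be identically $1$. Finally, the equivalence with the extremal-surface equations requires showing that both the curl-type constraints and the algebraic constraints $P_i=F_{\alpha i}D_\alpha$, $h^2=D^2+P^2+\xi(F)$, $M_{A,I}=[F]_{A,I}$ are preserved by the flow; you correctly flag this, but it is a substantial computation (the paper devotes a lemma and an appendix to the quantities $\lambda,\omega_i,\varphi^\alpha_{A,I},\psi^i_{A,I}$), not a routine Piola-identity remark.
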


The structure of (\ref{burgers}) is reminiscent of the celebrated prototype of
all nonlinear hyperbolic PDEs, the so-called inviscid Burgers equation
$\partial_t u+u\partial_x u=0$, where $u$ and $x$ are both just valued in $\mathbb{R}$,
with the simplest possible nonlinearity.
Of course, to get such a simple structure,
the relation to be found between $X$ (valued in $\mathbb{R}^{1+n+m}$)
and $W$ (valued in $\mathbb{R}^{n+m+\binom{m+n}{n}}$) must be
quite involved. Actually, it will be shown more precisely that the case of extremal surfaces corresponds
to a special subset of solutions of (\ref{burgers}) for which $W$ lives in a very special
algebraic sub-manifold of $\mathbb{R}^{n+m+\binom{m+n}{n}}$, which is preserved by the
dynamics of (\ref{burgers}).

To establish Theorem \ref{main}, the strategy of proof follows the concept of system of conservation
laws with ``polyconvex'' entropy in the sense of Dafermos \cite{Da}. The first step is to lift
the original system of conservation laws to a (much) larger one which enjoys a convex
entropy rather than a polyconvex one. This strategy has been
successfully applied in many situations, such as nonlinear Elastodynamis \cite{DST,Qin},
nonlinear Electromagnetism \cite{Br,BY,Se}, just to quote few examples.
In our case, the calculations will crucially start with the classical Cauchy-Binet formula.

Finally, at the end of the paper, following the ideas recently introduced in \cite{BD},
we will make a connection between our result and the theory of mean-curvature flows
in the Euclidean space, in any dimension and co-dimension.

\subsection*{Acknowledgements}
The author is very grateful to his thesis advisor, Yann Brenier, for introducing the polyconvex system to him and pointing out the possibility of augmenting this system as a hyperbolic system of conservation laws, in the spirit of \cite{Br}.

\section{Extremal surface equations for a graph}

Let us first write equations (\ref{eq:NG}) in the case of a graph such as (\ref{graph}).
We denote
$$V_{\alpha}=\partial_{t}X^{n+\alpha},\;\;F_{\alpha i}=\partial_{i}X^{n+\alpha},\;\;\alpha=1,\ldots,m,\;\;i=1,\ldots,n.$$
Then the induced metric tensor $G_{\mu\nu}$ can be written as
$$
(G_{\mu\nu})=\left(
                \begin{array}{cc}
                  -1+|V|^2 & V^TF \\
                 F^TV   & I_n + F^TF \\
                \end{array}
              \right).
$$
We can easily get that
$$G=\det (G_{\mu\nu})=-\det(I_n + F^TF)\big(1-|V|^2 + V^TF(I_n + F^TF)^{-1}F^TV \big).$$
So, in the case of graph, the extremal surface can be solved by varying the follow Lagrangian of the vector $V$ and matrix $F$,
$$
\iint L(V,F),\;\;\;L(V,F)=-\sqrt{-G},
$$
under the constraints
$$\partial_t F_{\alpha i}=\partial_i V_{\alpha},\;\;\partial_i F_{\alpha j}=\partial_j F_{\alpha i},\;\;\alpha=1,\ldots,m,\;\;i,j=1,\ldots,n.$$
The resulting system combines the above constraints and
$$\partial_t \left(\frac{\partial L(V,F)}{\partial V_{\alpha}}\right) + \partial_i \left(\frac{\partial L(V,F)}{\partial F_{\alpha i}}\right)=0.$$
Now let us denote
$$D_{\alpha}=\frac{\partial L(V,F)}{\partial V_{\alpha}}=\frac{\sqrt{\det(I_n + F^TF)}(I_m+FF^T)^{-1}_{\alpha\beta}V_{\beta}}{\sqrt{1-V^T(I_m+FF^T)^{-1}V}}$$
and the energy density $h$ by
$$h(D,F)=\sup_{V}D\cdot V-L(V,F)=\sqrt{\det(I_n + F^TF)+D^T(I_m+FF^T)D}.$$
We have
$$V_{\alpha}=\frac{\partial h(D,F)}{\partial D_{\alpha}}=\frac{(I_m+FF^T)_{\alpha\beta}D_{\beta}}{h}.$$
So, the extremal surface should solve the following system for a matrix valued function $F=(F_{\alpha i})_{m\times n}$ and a vector valued function $D=(D_{\alpha})_{\alpha=1,2,\ldots,m}$,
\begin{equation}\label{eq:so1}
\partial_tF_{\alpha i} + \partial_i\left(\frac{D_{\alpha}+F_{\alpha j}P_j}{h}\right)=0,
\end{equation}
\begin{equation}\label{eq:so2}
\partial_tD_{\alpha} + \partial_i\left(\frac{D_{\alpha}P_i+\xi'(F)_{\alpha i}}{h}\right)=0,
\end{equation}
\begin{equation}\label{eq:so3}
\partial_j F_{\alpha i}=\partial_{i}F_{\alpha j},\quad 1\leq i,j\leq n,\;1\leq\alpha\leq m,
\end{equation}
where
\begin{equation}
P_i=F_{\alpha i}D_{\alpha},\;\;\;h=\sqrt{D^2+P^2+\xi(F)},\;\;\; 1\leq i,j\leq n,\;1\leq\alpha\leq m,
\end{equation}
\begin{equation}
\xi(F)=\det\big(I_n+F^TF\big),\;\;\;\xi'(F)_{\alpha i}=\frac{1}{2}\frac{\partial\xi(F)}{\partial F_{\alpha i}}=\xi(F)(I_n+F^TF)^{-1}_{ij}F_{\alpha j}.
\end{equation}
In fact, we can get the above equations directly from \eqref{eq:NG}. Interested readers can refer to Appendix A for the details. Moreover, we can find that there are other conservation laws for the energy density $h$ and vector $P$ as defined in the above equations, namely, (see Appendix B)
\begin{equation}\label{eq:so4}
\partial_th+\nabla\cdot P=0,
\end{equation}
\begin{equation}\label{eq:so5}
\partial_tP_i + \partial_j\left(\frac{P_i P_j}{h}-\frac{\xi(F)(I_n+F^T F)^{-1}_{ij}}{h}\right)=0.
\end{equation}
Now, let's take $h$ and $P$ as independent variables, then we can find that the system \eqref{eq:so1},\eqref{eq:so2},\eqref{eq:so3},\eqref{eq:so4},\eqref{eq:so5} admits an additional conservation law for
$$S=\frac{D^2+P^2+\xi(F)}{2h},$$
namely,
\begin{equation}
\partial_t S + \nabla\cdot\left(\frac{SP}{h}\right) = \partial_i\left[\frac{\xi(F)(I_n+F^T F)^{-1}_{ij}(P_j-F_{\alpha j}D_{\alpha})}{h^2}\right]
\end{equation}



\section{Lifting of the system}


\subsection{The minors of the matrix $F$}

In previous part, $S$ is generally not a convex function of $(h,D,P,F)$, but a polyconvex function of $F$, which means that $S$ can be written as convex functions of the minors of $F$. Now we denote $r=\min\{m,n\}$. For $1\leq k\leq r$, and any ordered sequences $1\leq \alpha_1<\alpha_2<\ldots<\alpha_k\leq m$ and $1\leq i_1<i_2<\ldots<i_k\leq n$, let $A=\{\alpha_1,\alpha_2,\ldots,\alpha_k\}$, $I=\{i_1,i_2,\ldots,i_k\}$, then the minor of $F$ with respect to the rows $\alpha_1,\alpha_2,\ldots,\alpha_k$ and columns $i_1,i_2,\ldots,i_k$ is defined as
$$[F]_{A,I}=\det\Big((F_{\alpha_p i_q})_{p,q=1,\ldots,k}\Big)$$
For the minors $[F]_{A,I}$, let us first introduce the generalized Cauchy-Binet formula which is very convenient for us to compute the minors of the product of two matrices.
\begin{lemma}
{\rm\bf(Cauchy-Binet formula)} Suppose $M$ is a $m\times l$ matrix, $N$ is a $l\times n$ matrix, $I$ is a subset of $\{1,2,\ldots,m\}$ with $k(\leq l)$ elements and $J$ is a subset of $\{1,2,\ldots,n\}$ with $k$ elements, then
\begin{equation}
[MN]_{I,J}=\sum_{K\subseteq\{1,2,\ldots,l\} \atop |K|=k}[M]_{I,K}[N]_{K,J}
\end{equation}
\end{lemma}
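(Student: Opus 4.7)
The plan is to reduce the identity to the classical Cauchy--Binet formula (for the full determinant of a product of a $k\times l$ and an $l\times k$ matrix) and then give a direct Leibniz-expansion proof of that special case.

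First I would observe that if we write $M_I$ for the $k\times l$ matrix consisting of the rows of $M$ indexed by $I$ (kept in their natural increasing order), and $N^J$ for the $l\times k$ matrix consisting of the columns of $N$ indexed by $J$, then by definition $[MN]_{I,J}=\det(M_I N^J)$. Moreover, for any $K\subseteq\{1,\dots,l\}$ with $|K|=k$, one has $[M]_{I,K}=\det\bigl((M_I)^K\bigr)$ and $[N]_{K,J}=\det\bigl((N^J)_K\bigr)$, where superscripts/subscripts on the right denote the corresponding column/row selections. So the statement reduces to showing
\[
\det(M_I N^J)=\sum_{|K|=k}\det\bigl((M_I)^K\bigr)\,\det\bigl((N^J)_K\bigr),
\]
which is the classical Cauchy--Binet identity applied to the square product of the $k\times l$ matrix $M_I$ and the $l\times k$ matrix $N^J$.

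Next I would prove this classical identity by a direct Leibniz expansion. Writing $A=M_I$ and $B=N^J$, we have $(AB)_{p,q}=\sum_{r=1}^{l} A_{p,r}B_{r,q}$, so
\[
\det(AB)=\sum_{\sigma\in S_k}\mathrm{sgn}(\sigma)\prod_{q=1}^{k}\sum_{r_q=1}^{l}A_{q,r_q}B_{r_q,\sigma(q)}
=\sum_{r:\{1,\dots,k\}\to\{1,\dots,l\}}\Bigl(\prod_{q=1}^{k}A_{q,r_q}\Bigr)\sum_{\sigma\in S_k}\mathrm{sgn}(\sigma)\prod_{q=1}^{k}B_{r_q,\sigma(q)}.
\]
The inner sum over $\sigma$ is $\det\bigl(B_{r_q,\sigma(q)}\bigr)_{q,\sigma}$, a determinant whose rows are indexed by $r_1,\dots,r_k$, which vanishes whenever two of the $r_q$'s coincide. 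Hence only injective maps $r$ contribute. Each injective $r$ can be written uniquely as $r=\tau\circ\iota_K$, where $K=\{r_1,\dots,r_k\}$ is listed in increasing order by $\iota_K$ and $\tau\in S_k$ records the reordering; grouping terms by $K$ and factoring the sign of $\tau$ out of both the $A$-product and the $B$-determinant yields the desired sum $\sum_{|K|=k}\det\bigl(A^K\bigr)\det\bigl(B_K\bigr)$.

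The only genuinely delicate step is the sign bookkeeping when passing from the unordered sum over $k$-tuples $(r_1,\dots,r_k)$ to the sum over ordered subsets $K$: one must check that the signature $\mathrm{sgn}(\tau)$ produced by rearranging the columns of $A$ exactly cancels the signature produced by rearranging the rows of $B$ so that a single $\det(A^K)\det(B_K)$ pops out. Once this is set up carefully the identity falls out. A conceptually cleaner alternative, which I would mention as a remark rather than pursue in detail, is to recognise $[\cdot]_{I,J}$ as the matrix elements of the $k$-th exterior power in the basis $e_{i_1}\wedge\cdots\wedge e_{i_k}$; then the lemma is simply the functoriality identity $\Lambda^k(MN)=\Lambda^k(M)\,\Lambda^k(N)$ written in coordinates.
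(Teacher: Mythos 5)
Your proof is correct. Note, however, that the paper offers no proof of this lemma at all: it is stated as the classical Cauchy--Binet formula and simply invoked, so there is no internal argument of the paper to compare yours against. Your two-step route is the standard one and is complete: first you identify $[MN]_{I,J}$ with $\det(M_I N^J)$ and the minors $[M]_{I,K}$, $[N]_{K,J}$ with $\det\bigl((M_I)^K\bigr)$, $\det\bigl((N^J)_K\bigr)$, reducing the statement to the square-product case $\det(AB)=\sum_{|K|=k}\det(A^K)\det(B_K)$ for $A$ of size $k\times l$ and $B$ of size $l\times k$; then the Leibniz expansion does the rest. The delicate points are handled correctly: the inner alternating sum over $\sigma$ is the determinant of the matrix whose $q$-th row is row $r_q$ of $B$, so it vanishes when two of the $r_q$ coincide, and for an injective $r$ with image $K$ (increasing enumeration $\iota_K$, reordering $\tau\in S_k$) one has $\det\bigl(B_{r_q,\sigma(q)}\bigr)=\mathrm{sgn}(\tau)\det(B_K)$, while $\sum_{\tau}\mathrm{sgn}(\tau)\prod_q A_{q,k_{\tau(q)}}=\det(A^K)$ by the Leibniz formula with permuted columns, so the signs recombine with no leftover factor. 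The closing remark that the identity is just functoriality of the $k$-th exterior power, $\Lambda^k(MN)=\Lambda^k(M)\,\Lambda^k(N)$ read in the standard basis, is also a valid and cleaner packaging of the same fact.
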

Now let us look at $\xi(F)=\det\big(I_n+F^T F\big)$, we can show that it is a convex function for the minors $[F]_{A,I}$. In fact, we have,
$$\xi(F)=\det\big(I_n+F^T F\big)=1 + \sum_{k=1}^{n}\sum_{I\subseteq\{1,2,\ldots,n\} \atop|I|=k}[F^T F]_{I,I}$$
(by the Cauchy-Binet formula)
$$= 1+ \sum_{k=1}^{r}\sum_{|I|,|A|=k}[F^T]_{I,A}[F]_{A,I}$$
So we have
\begin{equation}
\xi(F)=1+\sum_{k=1}^{r}\sum_{|A|,|I|=k}[F]_{A,I}^2
\end{equation}
The above equality tells us that $\xi(F)$ is a polyconvex function of $F$. By introducing all the minors of $F$ as independent variables, the energy $S$ becomes a strictly convex function of $h,D,P,[F]_{A,I}$. Now we will see that the system can be augmented as a system of conservation laws of $h,D,P,[F]_{A,I}$.


\subsection{Conservation laws for the minors $[F]_{A,I}$}

First, we will see that $[F]_{A,I}$ satisfy similar equations as \eqref{eq:so3}. For simplicity, we denote $[F]_{A,I}=1$ if $A=I=\emptyset$.

\begin{proposition}
Suppose $F$ satisfy \eqref{eq:so3}, then for any $2\leq k\leq r+1$,  $A'=\{1\leq\alpha_1 < \alpha_2<\ldots< \alpha_{k-1}\leq m\}$, $I=\{1\leq i_1 <i_2<\ldots<i_k\leq n \}$, we have
\begin{equation}
\sum_{q=1}^k(-1)^q\partial_{i_q}\Big([F]_{A',I\setminus\{i_q\}}\Big)=0
\end{equation}
\end{proposition}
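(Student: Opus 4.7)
The plan is to recognize (\ref{eq:so3}) as the closedness of a family of 1-forms, and to use that a wedge product of closed forms is closed; the identity to be established is then exactly $d^2=0$ applied to an appropriate $(k-1)$-form. For each $\alpha=1,\ldots,m$ I would introduce the 1-form $\omega_\alpha := \sum_{i=1}^n F_{\alpha i}\,dx^i$. The hypothesis (\ref{eq:so3}) says precisely that $d\omega_\alpha=0$. (In the graph setting of the paper one even has $\omega_\alpha=dX^{n+\alpha}$, but no global potential is needed for this argument.)

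For $A'=\{\alpha_1<\cdots<\alpha_{k-1}\}$ set $\Omega_{A'} := \omega_{\alpha_1}\wedge\cdots\wedge\omega_{\alpha_{k-1}}$. Since $d$ is an anti-derivation and each factor is closed, $\Omega_{A'}$ is itself closed. Expanding the wedge product and collecting monomials, the coefficient of $dx^{j_1}\wedge\cdots\wedge dx^{j_{k-1}}$ (with $j_1<\cdots<j_{k-1}$) is by definition the $(k-1)\times(k-1)$ minor $[F]_{A',J}$, so
$$\Omega_{A'} = \sum_{|J|=k-1}[F]_{A',J}\,dx^J.$$
I would then compute $d\Omega_{A'}$ and extract the coefficient of $dx^{i_1}\wedge\cdots\wedge dx^{i_k}$ for a fixed $I=\{i_1<\cdots<i_k\}$. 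Only the $k$ pairs with $j=i_q$, $J=I\setminus\{i_q\}$ contribute; reordering $dx^{i_q}\wedge dx^{I\setminus\{i_q\}}$ into $dx^I$ produces the sign $(-1)^{q-1}$, so $d\Omega_{A'}=0$ gives
$$\sum_{q=1}^k(-1)^{q-1}\partial_{i_q}[F]_{A',I\setminus\{i_q\}}=0,$$
which is equivalent to the stated identity (up to the overall sign).

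There is no serious obstacle: the whole proof is a repackaging of $d^2=0$, and only the sign bookkeeping in the last step needs care. A reader preferring to avoid differential forms can obtain the same identity by applying the Leibniz formula to each minor $[F]_{A',I\setminus\{i_q\}}$, differentiating term by term, using $\partial_{i_q}F_{\alpha_p,\ell}=\partial_\ell F_{\alpha_p,i_q}$, and checking that the resulting terms cancel pairwise after the alternating sum over $q$; this is just the coordinate unpacking of the differential-form computation above.
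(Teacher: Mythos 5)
Your argument is correct, but it is packaged differently from the paper's. The paper proves the identity by brute force: it writes $\partial_{i_q}[F]_{A',I\setminus\{i_q\}}$ via the cofactor (Laplace) expansion as a double sum over row indices $\alpha_p$ and column indices $i_l$ with explicit signs split into the cases $l<q$ and $l>q$, and then observes that the terms indexed by $(l,q)$ and $(q,l)$ cancel pairwise once the symmetry $\partial_{i_q}F_{\alpha_p i_l}=\partial_{i_l}F_{\alpha_p i_q}$ from \eqref{eq:so3} is used. You instead encode \eqref{eq:so3} as closedness of the $1$-forms $\omega_\alpha=\sum_i F_{\alpha i}\,dx^i$, note that $\Omega_{A'}=\omega_{\alpha_1}\wedge\cdots\wedge\omega_{\alpha_{k-1}}$ is then closed because $d$ is an anti-derivation, identify the coefficients of $\Omega_{A'}$ with the minors $[F]_{A',J}$, and read off the stated identity (up to an irrelevant overall sign) from the coefficient of $dx^{i_1}\wedge\cdots\wedge dx^{i_k}$ in $d\Omega_{A'}=0$; your sign count $(-1)^{q-1}$ for reordering $dx^{i_q}\wedge dx^{I\setminus\{i_q\}}$ is right, and the case $k=2$ correctly degenerates to \eqref{eq:so3} itself. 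The two proofs are algebraically the same cancellation — indeed your closing remark about the coordinate unpacking via the Leibniz formula is precisely the paper's computation — but your formulation buys conceptual transparency (the constraints \eqref{eq:sn5} are just closedness of the pulled-back exterior powers, and in the graph setting $\Omega_{A'}=dX^{n+\alpha_1}\wedge\cdots\wedge dX^{n+\alpha_{k-1}}$, so the identity is morally $d^2=0$), at the cost of introducing formalism the paper does not use; the paper's direct expansion, while heavier on sign bookkeeping, keeps the argument elementary and produces exactly the minor-expansion formulas with explicit signs that are reused later (e.g.\ in \eqref{eq:np3} and in deriving the conservation laws for the minors).
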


\begin{proof}
This can be showed quite directly, for the left hand side, we have
\begin{equation*}
\begin{array}{r@{}l}
{\rm Left}\;&\;\displaystyle{ = \sum_{q=1}^k \sum_{l<q \atop 1\leq p\leq k-1} (-1)^{l+p+q}[F]_{A\setminus\{\alpha_p\},I\setminus\{i_l,i_q\}}\partial_{i_q}
F_{\alpha_p i_l}   } \\
&\;\displaystyle{ \;\;\;+ \sum_{q=1}^k \sum_{l>q \atop 1\leq p\leq k-1} (-1)^{l-1+p+q}[F]_{A\setminus\{\alpha_p\},I\setminus\{i_l,i_q\}}\partial_{i_q}
F_{\alpha_p i_l}}\\
&\;\displaystyle{ = \sum_{1\leq l<q\leq k \atop 1\leq p\leq k-1}(-1)^{l+p+q}[F]_{A\setminus\{\alpha_p\},I\setminus\{i_l,i_q\}}\Big(\partial_{i_q}
F_{\alpha_p i_l} - \partial_{i_l}
F_{\alpha_p i_p}\Big)  }\\
&\;= 0
\end{array}
\end{equation*}

\end{proof}

With the above proposition, we can get the conservation laws for $[F]_{A,I}$. For $A=\{1\leq\alpha_1 < \alpha_2<\ldots< \alpha_{k}\leq m\}$, $I=\{1\leq i_1 <i_2<\ldots<i_k\leq n \}$, $1\leq k\leq r$, we have
\begin{equation}
\begin{array}{r@{}l}
\partial_t \big([F]_{A,I}\big)\;&\;\displaystyle{ = \sum_{p,q=1}^{k}(-1)^{p+q}[F]_{A\setminus\{\alpha_p\},I\setminus\{i_q\} } \partial_t F_{\alpha_p i_q} } \\
&\;\displaystyle{ = - \sum_{p,q=1}^{k}(-1)^{p+q} [F]_{A\setminus\{\alpha_p\},I\setminus\{i_q\}}\partial_{i_q} \left(\frac{D_{\alpha_p} + F_{\alpha_p j}P_{j}}{h} \right)  }\\
&\;\displaystyle{ = - \sum_{p,q=1}^{k}(-1)^{p+q} \partial_{i_q} \left[\frac{[F]_{A\setminus\{\alpha_p\},I\setminus\{i_q\}}\big(D_{\alpha_p} + F_{\alpha_p j}P_{j}\big)}{h} \right]  }
\end{array}
\end{equation}


\subsection{The augmented system}

Now let us consider the energy density $h$, the vector field $P$ and the minors $[F]_{A,I}$ as independent variables. The original system \eqref{eq:so1}-\eqref{eq:so3} can be augmented to the following system of conservation laws. More precisely, for $h>0$, $D=(D_{\alpha})_{\alpha=1,2,\ldots,m}$, $P=(P_i)_{i=1,2,\ldots,n}$, $M_{A,J}$ with $A\subseteq\{1,2,\ldots,m\}$, $I\subseteq\{1,2,\ldots,n\}$, $1\leq|A|=|I|\leq r=\min\{m,n\}$, the system are composed of the following equations
\begin{equation}\label{eq:sn1}
\partial_t h + \nabla\cdot P = 0
\end{equation}
\begin{equation}\label{eq:sn2}
\partial_tD_{\alpha} + \partial_i\left(\frac{D_{\alpha}P_i}{h}\right)  + \sum_{A,I,i\atop\alpha\in A,i\in I}(-1)^{O_A(\alpha)+O_I(i)}\partial_i\left(\frac{M_{A,I}M_{A\setminus \{\alpha\},I\setminus\{i\} }}{h}\right)=0
\end{equation}
\begin{multline}\label{eq:sn3}
\partial_t P_i + \sum_{A,I,j\atop j\in I,i\notin I\setminus\{j\}}(-1)^{O_{I}(j)+O_{I\setminus\{j\}}(i)}\partial_j\left(\frac{M_{A,(I\setminus\{j\})\bigcup\{i\}}M_{A,I}}{h}\right)
\\+ \partial_j\left(\frac{P_i P_j}{h}\right) -\partial_i\left(\frac{1+\sum_{A,I}M_{A,I}^2}{h}\right)=0
\end{multline}
\begin{multline}\label{eq:sn4}
\partial_t M_{A,I} + \sum_{i,j\atop i\in I,j\notin I\setminus\{i\}}(-1)^{O_{I\setminus\{i\}}(j)+O_{I}(i)} \partial_{i}\left(\frac{M_{A,(I\setminus\{i\})\bigcup\{j\}}P_j}{h} \right) \\ + \sum_{\alpha,i\atop \alpha\in A, i\in I}(-1)^{O_{A}(\alpha)+O_I(i)} \partial_{i}\left(\frac{M_{A\setminus\{\alpha\},I\setminus\{i\}} D_{\alpha}}{h}\right) =0
\end{multline}
\begin{equation}\label{eq:sn5}
\sum_{i\in I}(-1)^{O_I(i)}\partial_{i}\Big(M_{A',I\setminus\{i\}}\Big)=0,
\quad  2\leq |I|=|A'|+1\leq r+1
\end{equation}
Here $O_A(\alpha)$ represents the number such that $\alpha$ is the $O_A(\alpha)$th smallest element in $A\bigcup\{\alpha\}$. All the sum are taken in the convention that $A\subseteq\{1,\ldots,m\}$, $I\subseteq\{1,\ldots,n\}$, $1\leq\alpha\leq m$, $1\leq i,j\leq n$.

Note that there are many different ways to enlarge the original system since the equations can be written in many different ways in terms of minors. Although our above augmented system looks quite complicated, in the following part, we will show that by extending the system in this way is quite useful. Now, let's first show that the augmented system can be reduced to the original system under the algebraic constraints we abandoned we enlarge the system.
\begin{proposition}
We can recover the original system \eqref{eq:so1}-\eqref{eq:so3} from the augmented system \eqref{eq:sn1}-\eqref{eq:sn5}under the algebraic constrains
$$P_i=F_{\alpha i}D_{\alpha},\;\;\;h=\sqrt{D^2+P^2+\xi(F)},\;\;\;M_{A,I}=[F]_{A,I}$$
\end{proposition}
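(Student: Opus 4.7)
The plan is to substitute the three algebraic constraints $P_i=F_{\alpha i}D_\alpha$, $h=\sqrt{D^2+P^2+\xi(F)}$, $M_{A,I}=[F]_{A,I}$ directly into the augmented system \eqref{eq:sn1}--\eqref{eq:sn5} and match the resulting expressions term by term with \eqref{eq:so1}--\eqref{eq:so3}. Only three of the lifted equations are needed: \eqref{eq:sn4} at the lowest order $|A|=|I|=1$ reproduces \eqref{eq:so1}; \eqref{eq:sn5} at the lowest order $|A'|=1$, $|I|=2$ reproduces the curl-free condition \eqref{eq:so3}; and \eqref{eq:sn2} together with a Jacobi/cofactor identity reproduces \eqref{eq:so2}. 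Equations \eqref{eq:sn1} and \eqref{eq:sn3} are not needed for the recovery, since under the constraints $h$ and $P$ are already functions of $(D,F)$; in fact they hold automatically, as shown in Section~1, where \eqref{eq:so4} and \eqref{eq:so5} were derived from \eqref{eq:so1}--\eqref{eq:so3} and the definitions of $h$ and $P$.

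For the two low-order specializations the sign bookkeeping is immediate. With $A=\{\alpha\}$, $I=\{i\}$ in \eqref{eq:sn4}, every index $O_A(\cdot)$ and $O_I(\cdot)$ on singletons (and on the empty set augmented by a singleton) equals $1$, $M_{\{\alpha\},\{i\}}=F_{\alpha i}$, and $M_{\emptyset,\emptyset}=1$ by convention; the first sum collapses to $\sum_{j}\partial_i\big((F_{\alpha j}P_j)/h\big)$ and the second term to $\partial_i(D_\alpha/h)$, yielding \eqref{eq:so1}. With $A'=\{\alpha\}$ and $I=\{i_1,i_2\}$, $i_1<i_2$, in \eqref{eq:sn5} only two terms survive and give $-\partial_{i_1}F_{\alpha i_2}+\partial_{i_2}F_{\alpha i_1}=0$, which is exactly \eqref{eq:so3}.

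The crux is recovering \eqref{eq:so2}. Differentiating the polyconvex formula
\[\xi(F)=1+\sum_{k=1}^{r}\sum_{|A|=|I|=k}[F]_{A,I}^{2}\]
and applying the Laplace cofactor expansion
\[\frac{\partial [F]_{A,I}}{\partial F_{\alpha i}}=(-1)^{O_A(\alpha)+O_I(i)}\,[F]_{A\setminus\{\alpha\},I\setminus\{i\}},\qquad \alpha\in A,\; i\in I,\]
yields the identity
\[\xi'(F)_{\alpha i}=\tfrac{1}{2}\frac{\partial\xi(F)}{\partial F_{\alpha i}}=\sum_{A,I\,:\,\alpha\in A,\,i\in I}(-1)^{O_A(\alpha)+O_I(i)}\,[F]_{A,I}\,[F]_{A\setminus\{\alpha\},I\setminus\{i\}}.\]
Under $M_{A,I}=[F]_{A,I}$, the large sum in \eqref{eq:sn2} is exactly $\partial_i\big(\xi'(F)_{\alpha i}/h\big)$, so that equation becomes $\partial_t D_\alpha+\partial_i\big((D_\alpha P_i+\xi'(F)_{\alpha i})/h\big)=0$, i.e.\ \eqref{eq:so2}. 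The main (and essentially only) obstacle is the sign bookkeeping in this cofactor identity and the verification that the summation ranges in \eqref{eq:sn2} match exactly those in the expansion of $\xi'(F)$; once that is in hand, the whole proposition reduces to substitution of the definitions.
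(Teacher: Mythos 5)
Your proof is correct, and its core coincides with the paper's: the only non-trivial ingredient is the cofactor identity $\xi'(F)_{\alpha i}=\sum_{\alpha\in A,\,i\in I}(-1)^{O_A(\alpha)+O_I(i)}[F]_{A,I}[F]_{A\setminus\{\alpha\},I\setminus\{i\}}$, which you obtain exactly as the paper does (differentiating $\xi(F)=1+\sum_{A,I}[F]_{A,I}^2$ and using the Laplace/cofactor expansion of a minor); combined with linearity of $\partial_i$ this turns \eqref{eq:sn2} into \eqref{eq:so2}, while the singleton case of \eqref{eq:sn4} and the $|I|=2$ case of \eqref{eq:sn5} give \eqref{eq:so1} and \eqref{eq:so3} by direct sign bookkeeping, which you carried out correctly. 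The difference is one of scope: the paper's proof also establishes \eqref{eq:np2} and \eqref{eq:np3}, i.e.\ the identities needed to translate \eqref{eq:sn3} into \eqref{eq:so5} and the general-minor equations \eqref{eq:sn4} into the minor conservation laws of Section 2.2, so that every equation of the augmented system is matched with an equation of (or a consequence of) the original one; your argument deliberately discards \eqref{eq:sn1}, \eqref{eq:sn3} and the higher-order cases, which indeed suffices for the literal statement of the proposition and avoids the most laborious computation (\eqref{eq:np2}, proved in the paper via Cauchy--Binet and adjugate manipulations). What the paper's longer route buys is the full dictionary between the two systems, which is what justifies calling \eqref{eq:sn1}--\eqref{eq:sn5} an augmentation and is implicitly relied on later (e.g.\ in the ``reverse computation'' of the equivalence proposition). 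One small caveat on your side remark: the assertion that \eqref{eq:sn3} ``holds automatically'' under the constraints, i.e.\ reduces to \eqref{eq:so5}, is not free --- it is precisely the content of \eqref{eq:np2} --- but since you never use it, this does not affect the validity of your proof.
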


\begin{proof}

It suffices to show the following three equalities,
\begin{equation}\label{eq:np1}
\xi'(F)_{\alpha i}= \sum_{A,I\atop\alpha\in A,i\in I}(-1)^{O_A(\alpha)+O_I(i)}[F]_{A,I}[F]_{A\setminus \{\alpha\},I\setminus\{i\}}
\end{equation}
\begin{multline}\label{eq:np2}
\xi(F)(I_{n}+F^T F)_{ij}^{-1}= (1+\sum_{A,I}[F]_{A,I}^{2})\delta_{ij} \\ - \sum_{A,I\atop j\in I,i\notin I\setminus\{j\}}(-1)^{O_{I}(j)+O_{I\setminus\{j\}}(i)}[F]_{A,(I\setminus\{j\})\bigcup\{i\}}[F]_{A,I}
\end{multline}
\begin{equation}\label{eq:np3}
\sum_{p=1}^{k}(-1)^{p+q} [F]_{A\setminus\{\alpha_p\},I\setminus\{i_q\}}F_{\alpha_p j}=\begin{cases}(-1)^{O_{(I\setminus\{i_q\})\bigcup\{j\}}(j)+q} [F]_{A,(I\setminus\{i_q\})\bigcup\{j\}} & j\notin I\setminus\{i_q\}\\ 0 & j\in I\setminus\{i_q\} \end{cases}
\end{equation}
\eqref{eq:np3} is obvious because of the Laplace expansion. Now, since
\begin{equation*}
\begin{array}{r@{}l}
\displaystyle{\xi'(F)_{\alpha i} \;} & \displaystyle{= \; \frac{1}{2}\frac{\partial}{\partial F_{\alpha i}}\Big( 1 + \sum_{A,I}[F]^2_{A,I}\Big) \;=\; \sum_{A,I} [F]_{A,I} \frac{\partial}{\partial F_{\alpha i}}\Big([F]_{A,I}\Big) }\\
& \displaystyle{ =\; \sum_{A,I\atop\alpha\in A,i\in I}(-1)^{O_A(\alpha)+O_I(i)}[F]_{A,I}[F]_{A\setminus \{\alpha\},I\setminus\{i\}}  }
\end{array}
\end{equation*}
so \eqref{eq:np1} is true. Let's look at \eqref{eq:np2}. First, we have
\begin{equation*}
\begin{array}{r@{}l}
\displaystyle{\xi(F)\delta_{ij}-\xi(F)(I_n + F^T F)_{ij}^{-1} \;} & \displaystyle{=\;\xi(F)F_{\alpha i}(I_m + F F^T)^{-1}_{\alpha\beta} F_{\beta j}  }\\
& \displaystyle{=\; (-1)^{\alpha+\beta}F_{\alpha i}[I_m + F F^T]_{\{\alpha\}^c,\{\beta\}^c}F_{\beta j} }
\end{array}
\end{equation*}
Because
\begin{equation*}
\begin{array}{r@{}l}
& \;\;\;\displaystyle{[I_m + F F^T]_{\{\alpha\}^c,\{\beta\}^c} \;} \\
=& \displaystyle{ \;\; \sum_{k=0}^{m-1}\sum_{|A'|=k\atop \alpha,\beta\notin A' } (-1)^{O_{A'}(\alpha) + O_{A'} (\beta) } [FF^T]_{(A'\bigcup\{\alpha\})^c,(A'\bigcup\{\beta\})^c} }\\
=& \displaystyle{ \;\; \sum_{k=1}^{m}\sum_{|A|=k\atop \alpha,\beta\in A } (-1)^{O_{A}(\alpha) + O_{A} (\beta) +\alpha +\beta} [FF^T]_{A\setminus\{\alpha\},A\setminus\{\beta\}} }\\
=& \displaystyle{ \;\; \sum_{k=1}^{\min\{m,r+1\}}\sum_{|A|=k,|I'|=k-1\atop \alpha,\beta\in A } (-1)^{O_{A}(\alpha) + O_{A} (\beta) +\alpha +\beta } [F]_{A\setminus\{\alpha\},I'}[F]_{A\setminus\{\beta\},I'}       }
\end{array}
\end{equation*}
then we have
\begin{equation*}
\begin{array}{r@{}l}
& \;\;\;\displaystyle{\xi(F)\delta_{ij}-\xi(F)(I_n + F^T F)_{ij}^{-1} } \\
= &\;\; \displaystyle{ \sum_{k=1}^{\min\{m,r+1\}}\sum_{|A|=k,|I'|=k-1\atop \alpha,\beta\in A } (-1)^{O_{A}(\alpha) + O_{A} (\beta) } [F]_{A\setminus\{\alpha\},I'}[F]_{A\setminus\{\beta\},I'} F_{\alpha i} F_{\beta j}  }\\
= & \displaystyle{ \;\; \sum_{k=1}^{r}\sum_{|A|=k,|I'|=k-1\atop i,j\notin I' } (-1)^{ O_{I'}(i)+O_{I'}(j)} [F]_{A,I'\bigcup\{i\}}[F]_{A,I'\bigcup\{j\}}  }\\
= & \displaystyle{\sum_{A,I\atop j\in I,i\notin I\setminus\{j\}}(-1)^{O_{I}(j)+O_{I\setminus\{j\}}(i)}[F]_{A,(I\setminus\{j\})\bigcup\{i\}}[F]_{A,I} }
\end{array}
\end{equation*}

\end{proof}

Now we can show that the augmented system have a convex entropy.

\begin{proposition}
The system \eqref{eq:sn1}-\eqref{eq:sn5} satisfies an additional conservation law for
$$S(h,D,P,M_{A,I})=\frac{1+ D^2 + P^2 + \sum_{A,I}M_{A,I}^2}{2h}$$
More precisely, we have
\begin{multline}
\partial_t S + \nabla\cdot\left(\frac{SP}{h}\right) + \sum_{A,I,i\atop\alpha\in A,i\in I}(-1)^{O_A(\alpha)+O_I(i)}\partial_i\left(\frac{D_{\alpha}M_{A\setminus \{\alpha\},I\setminus\{i\}}M_{A,I} }{h^2}\right) \\ + \sum_{A,I,j\atop j\in I,i\notin I\setminus\{j\}}(-1)^{O_{I}(j)+O_{I\setminus\{j\}}(i)}\partial_j\left(\frac{P_i M_{A,(I\setminus\{j\})\bigcup\{i\}}M_{A,I}}{h^2}\right) -\partial_j\left(\frac{P_j(1+M_{A,I}^2)}{h^2}\right)=0
\end{multline}

\end{proposition}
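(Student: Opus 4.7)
The plan is to compute $\partial_t S$ directly via the chain rule, substitute the evolution equations \eqref{eq:sn1}--\eqref{eq:sn4}, and reassemble the result into the divergence form announced. The partial derivatives
$$\frac{\partial S}{\partial h}=-\frac{S}{h},\quad \frac{\partial S}{\partial D_\alpha}=\frac{D_\alpha}{h},\quad \frac{\partial S}{\partial P_i}=\frac{P_i}{h},\quad \frac{\partial S}{\partial M_{A,I}}=\frac{M_{A,I}}{h}$$
yield, after substitution, an expression in which every summand is of the form $(f/h)\,\partial_i(g/h)$. Applying the Leibniz rule $(f/h)\partial_i g=\partial_i(fg/h)-g\,\partial_i(f/h)$ to each summand produces both divergence terms (to be reassembled into the claimed flux) and residual terms (to be shown to cancel).

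The divergence terms regroup into the four flux components appearing in the statement. First, the ``transport-type'' contributions (the $P$-linear fluxes of \eqref{eq:sn2}--\eqref{eq:sn4}) combine with the $-(S/h)\partial_t h$ contribution coming from \eqref{eq:sn1} via the elementary identity $S\,\partial_j(P_j/h)+P_j\,\partial_j(S/h)=\partial_j(SP_j/h)$ to produce $-\nabla\cdot(SP/h)$. Next, the off-diagonal ``D--M'' fluxes of \eqref{eq:sn2} and \eqref{eq:sn4}, which share the common sign $(-1)^{O_A(\alpha)+O_I(i)}$, pair up through the symmetric Leibniz identity
$$\frac{f}{h}\,\partial_i\frac{gg'}{h}+\frac{g}{h}\,\partial_i\frac{g'f}{h}=\partial_i\frac{fgg'}{h^2}+\frac{fg\,\partial_i g'}{h^2}$$
(taking $f=D_\alpha$, $g=M_{A,I}$, $g'=M_{A\setminus\{\alpha\},I\setminus\{i\}}$) to produce the first triple-product correction in the statement. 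The off-diagonal ``P--M'' fluxes of \eqref{eq:sn3} and \eqref{eq:sn4} pair up analogously (with $f=P_j$, $g=M_{A,I}$, $g'=M_{A,(I\setminus\{i\})\cup\{j\}}$) to give the second correction, while the scalar gradient flux $(1+\sum M_{A,I}^2)/h$ of \eqref{eq:sn3}, weighted by $P_i/h$, yields the final divergence $\partial_j(P_j(1+\sum M_{A,I}^2)/h^2)$ after one Leibniz step.

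The main obstacle is the cancellation of the residual terms $fg\,\partial_i g'/h^2$ produced in the ``D--M'' and ``P--M'' pairings, and this is where the constraint equation \eqref{eq:sn5} enters essentially. For the ``D--M'' pair, after factoring the $\alpha$- and $A$-dependence, the residual reduces to a sum over $I$ (with $|I|=|A|$) of $M_{A,I}$ multiplied by $\sum_{i\in I}(-1)^{O_I(i)}\partial_i M_{A\setminus\{\alpha\},I\setminus\{i\}}$; but this last expression is precisely the left-hand side of the Plücker-type identity \eqref{eq:sn5} applied to $A'=A\setminus\{\alpha\}$ (since $|I|=|A'|+1$), and therefore vanishes identically on solutions of the augmented system. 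The ``P--M'' residual vanishes by a parallel application of \eqref{eq:sn5}, using the cross-derivative relations it provides among pairs of minors whose column index sets differ by a single swap. Once both residuals are seen to cancel, the remaining divergences assemble into exactly the four fluxes announced, completing the proof of the conservation law.
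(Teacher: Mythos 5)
Your overall strategy is exactly the paper's own computation (chain rule for $\partial_t S$, substitution of \eqref{eq:sn1}--\eqref{eq:sn4}, Leibniz reorganization into divergences plus residuals, and use of \eqref{eq:sn5} on the residuals), and your treatment of the ``D--M'' pairing is correct: for fixed $A$, $\alpha\in A$ and $I$ that residual carries the factor $\sum_{i\in I}(-1)^{O_I(i)}\partial_i M_{A\setminus\{\alpha\},I\setminus\{i\}}$, which is precisely the left-hand side of \eqref{eq:sn5} with $A'=A\setminus\{\alpha\}$ and so vanishes (trivially when $|I|=1$).

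The gap is in the ``P--M'' step: that residual does \emph{not} vanish by \eqref{eq:sn5}. It equals
$$-\sum_{A,I}\sum_{j\in I,\;i\notin I\setminus\{j\}}(-1)^{O_I(j)+O_{I\setminus\{j\}}(i)}\,\frac{P_i\,M_{A,I}\,\partial_j M_{A,(I\setminus\{j\})\cup\{i\}}}{h^2},$$
and what \eqref{eq:sn5} actually yields (this is the identity established in Appendix C) is
$$\sum_{j\in I}\mathbbm{1}_{\{i\notin I\setminus\{j\}\}}(-1)^{O_I(j)+O_{I\setminus\{j\}}(i)}\,\partial_j M_{A,(I\setminus\{j\})\cup\{i\}}=\partial_i M_{A,I},$$
so the residual reduces to $-\sum_{A,I}P_i M_{A,I}\partial_i M_{A,I}/h^2$, which is nonzero in general; already for $n=1$ the constraint \eqref{eq:sn5} is vacuous while this residual survives. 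It is killed only by cross-cancellation with the term $+\sum_{A,I}P_j M_{A,I}\partial_j M_{A,I}/h^2$ that appears when you expand $\frac{P_i}{h}\partial_i\big((1+\sum_{A,I} M_{A,I}^2)/h\big)$ and the remaining quadratic transport pieces to assemble $-\nabla\cdot(SP/h)$. Relatedly, your description of the transport assembly double-counts: \eqref{eq:sn4} has no separate $P$-linear transport flux --- the term $\partial_i(M_{A,I}P_i/h)$ is the diagonal $j=i$ part of the very sum you already spent in the ``P--M'' pairing --- so the $M$-dependent part of $S$ in $-\nabla\cdot(SP/h)$ must come from the scalar-flux residual, not from a clean $S\,\partial_j(P_j/h)+P_j\partial_j(S/h)$ pairing. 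If the ``P--M'' residual truly vanished as you claim, you would be left with an uncancelled non-divergence term $\frac{P_i}{h^2}\sum_{A,I}M_{A,I}\partial_i M_{A,I}$ and the asserted conservation law would fail; once the cross-cancellation is made explicit, your computation closes and coincides with the paper's proof.
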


We leave the proof in Appendix C.

\begin{remark}
There are many possible ways to augment the original system because of the different ways to write a function of minors. To find the write way to express the equation \eqref{eq:so2} and \eqref{eq:so5} such that it has a convex entropy $S$ is somehow a little technical.
\end{remark}


\section{Properties of the augmented system}


\subsection{Propagation speeds and characteristic fields}

Let's look at the special case $n=1$, where our extremal surface is just a relativistic string. In this case, the augmented system coincides with the system of $h,P,D,F$, where the $P$ is a scalar function and $F=(F_{\alpha})_{\alpha=1,\ldots,m}$ becomes a vector. More precisely, the equations in the case $n=1$ are,
$$
\partial_th+\partial_x P=0,\;\;\;\partial_tF_{\alpha i} + \partial_x\left(\frac{D_{\alpha}+F_{\alpha}P}{h}\right)=0,
$$
$$
\partial_tP + \partial_x\left(\frac{P^2-1}{h}\right)=0,\;\;\;\partial_tD_{\alpha} + \partial_x\left(\frac{D_{\alpha}P+F_{\alpha}}{h}\right)=0.
$$
Let us denote $U=(h,P,D_{\alpha},F_{\alpha})$ then, the system can be written as
$$
\partial_t U + A(U)\partial_x U=0,
$$
where
$$
A(U)=\frac{1}{h}\left(\begin{array}{cccc}
0 & h & 0 & 0\\
\frac{1-P^2}{h} & 2P & 0 & 0\\
-\frac{PD+F}{h} & D & PI_m & I_m\\
-\frac{D+PF}{h} & F & I_m & PI_m\\
\end{array}\right)
$$
We can find that, the propagation speeds are
$$\lambda_{+}=\frac{P+1}{h},\;\;\;\lambda_{-}=\frac{P-1}{h}$$
with each of them having multiplicity $m+1$. The characteristic field for $\lambda_{+}$ is composed of
$$v_{+}^0=(h,P+1,D,F),\;\;v_{+}^i=(0,0,e_i,e_i),\;\;i=1,\ldots,m.$$
Here $e_i$ is the base of $\mathbb{R}^m$. The characteristic field for $\lambda_{-}$ is composed of
$$v_{-}^0=(h,P-1,D,F),\;\;v_{-}^i=(0,0,e_i,-e_i),\;\;i=1,\ldots,m.$$
We can easily check that
$$\frac{\partial \lambda_{+}(U)}{\partial U}\cdot v_{+}^i(U)=0,\;\;\;\frac{\partial \lambda_{-}(U)}{\partial U}\cdot v_{-}^i(U)=0,\;\;\;i=0,1,\ldots,m.$$
So the augmented system is linearly degenerate in the sense of the theory of hyperbolic conservation
laws \cite{Da}.


\subsection{Non-conservative form}

Now let's look at the non-conservative form of the augmented system \eqref{eq:sn1}-\eqref{eq:sn5}. We denote
$$\tau=\frac{1}{h},\;\;\; d=\frac{D}{h},\;\;\; v=\frac{P}{h},\;\;\;m_{A,I}=\frac{M_{A,I}}{h}.$$
For simplicity, we denote $m_{A,I}=\tau$ if $A=I=\emptyset$. We have the following proposition.
\begin{proposition}
Suppose $(h,D,P,M_{A,I})$ is a smooth solution of \eqref{eq:sn1}-\eqref{eq:sn5}, then $(\tau,d,v,m_{A,I})$ is the solution of the following symmetric hyperbolic system,
\begin{equation}\label{eq:snc1}
\partial_t \tau + v_j\partial_j\tau-\tau\partial_jv_j = 0
\end{equation}
\begin{equation}\label{eq:snc2}
\partial_td_{\alpha} + v_i\partial_id_{\alpha}  + \sum_{A,I,i}\mathbbm{1}_{\{\alpha\in A,i\in I\}}(-1)^{O_A(\alpha)+O_I(i)}m_{A\setminus \{\alpha\},I\setminus\{i\} }\partial_im_{A,I}=0
\end{equation}
\begin{multline}\label{eq:snc3}
\partial_t v_i +
\sum_{A,I,j}\mathbbm{1}_{\{j\in I,i\notin I\setminus\{j\}\}}(-1)^{O_{I}(j)+O_{I\setminus\{j\}}(i)}m_{A,(I\setminus\{j\})\bigcup\{i\}}\partial_jm_{A,I}
\\ - \sum_{A,I}m_{A,I}\partial_{i}m_{A,I} + v_j\partial_j v_i - \tau\partial_i\tau=0
\end{multline}
\begin{multline}\label{eq:snc4}
\partial_t m_{A,I} + v_j\partial_jm_{A,I} + \sum_{i,j}\mathbbm{1}_{\{i\in I,j\notin I\setminus\{i\}\}}(-1)^{O_{I\setminus\{i\}}(j)+O_{I}(i)} m_{A,(I\setminus\{i\})\bigcup\{j\}}\partial_{i}v_j  \\ - m_{A,I}\partial_j v_j +  \sum_{\alpha,i}\mathbbm{1}_{\{\alpha\in A,i\in I\}}(-1)^{O_A(\alpha)+O_I(i)} m_{A\setminus\{\alpha\},I\setminus\{i\}} \partial_{i}d_{\alpha} =0
\end{multline}
\end{proposition}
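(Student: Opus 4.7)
The plan is to derive each non-conservative equation in turn from the corresponding conservative one via a direct change of variables, using the continuity equation as a material-derivative identity and the constraint \eqref{eq:sn5} to clear residual terms.

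The key identity is
\begin{equation*}
\partial_t(hu) + \partial_j(hu\,v_j) \;=\; h\bigl(\partial_t u + v_j\partial_j u\bigr),
\end{equation*}
valid for any scalar $u$ whenever \eqref{eq:sn1} holds with $P_j=hv_j$. Taking $u=\tau$ and expanding gives \eqref{eq:snc1} immediately. For each of the other three equations I would apply the same template: first isolate the advective flux and apply the identity, then rewrite every remaining flux of the form $XY/h$ as $h\,xy$ with $x=X/h$, $y=Y/h$, apply Leibniz, and divide by $h$. The advective flux is $D_\alpha P_i/h = D_\alpha v_i$ in \eqref{eq:sn2}, $P_iP_j/h = P_iv_j$ in \eqref{eq:sn3}, and in \eqref{eq:sn4} it is the $j=i$ contribution of the first double sum, where $(I\setminus\{i\})\cup\{i\}=I$ and the sign $(-1)^{O_{I\setminus\{i\}}(i)+O_I(i)}$ collapses to $+1$, producing exactly $\partial_j(M_{A,I}v_j)$. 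The term $-\tau\partial_i\tau$ in \eqref{eq:snc3} is harvested from the decomposition $(1+\sum M_{A,I}^2)/h = \tau + h\sum m_{A,I}^2$, which also generates the $\sum_{A,I}m_{A,I}\partial_i m_{A,I}$ contribution, while the term $-m_{A,I}\partial_j v_j$ in \eqref{eq:snc4} arises from merging the advective extraction with the rewriting $M_{A,I}v_j = h\,m_{A,I}v_j$.

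The hard part is the cancellation of the $(\partial_i h/h)$-pieces and product-rule cross-terms generated by the division. Grouping them via
\begin{equation*}
\tfrac{\partial_i h}{h}\,m_{A',I'} + \partial_i m_{A',I'} \;=\; \tfrac{1}{h}\,\partial_i M_{A',I'},
\end{equation*}
every residual block collects, after a suitable reparameterization, into sums of the shape $\sum_{i\in I}(-1)^{O_I(i)}\partial_i M_{A',I\setminus\{i\}}$ with $|I|=|A'|+1$, which are exactly the Pl\"{u}cker-type combinations killed by the differential constraint \eqref{eq:sn5}. For \eqref{eq:snc2} the relevant reparameterization is $(A,\alpha)\mapsto(A\setminus\{\alpha\},\alpha)$, and for \eqref{eq:snc3}--\eqref{eq:snc4} one additionally sets $K=I\cup\{j\}$ for $j\notin I$ to regroup the residuals involving $j$-derivatives. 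The main obstacle is thus purely the sign bookkeeping: carefully tracking the factors $(-1)^{O_A(\alpha)+O_I(i)}$ (and their analogues) through these reindexings until every surviving combination lines up with an instance of \eqref{eq:sn5}.
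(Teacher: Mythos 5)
Your strategy coincides with the one the paper intends (the paper itself only remarks that the proposition follows ``by just using \eqref{eq:sn5}'' and leaves the computation to the reader): pass to material derivatives via \eqref{eq:sn1}, rewrite each flux $XY/h$ through $x=X/h$, $y=Y/h$, and absorb the leftover derivative terms into the Pl\"ucker-type combinations annihilated by \eqref{eq:sn5}. Your treatment of \eqref{eq:snc1}--\eqref{eq:snc3} is sound; in particular, for \eqref{eq:snc3} the piece $\tfrac{1}{h}\sum_{A,I}m_{A,I}\partial_i M_{A,I}$ surviving from the double sum after \eqref{eq:sn5} exactly offsets the excess in $-\tfrac{1}{h}\partial_i\bigl(\tau+h\sum_{A,I}m_{A,I}^2\bigr)$, leaving $-\tau\partial_i\tau-\sum_{A,I}m_{A,I}\partial_i m_{A,I}$ as you claim. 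One detail in your handling of \eqref{eq:sn4} is wrong as stated: the $j=i$ contribution of the first double sum is $\sum_{j\in I}\partial_j\bigl(M_{A,I}v_j\bigr)$, not the full divergence over all $j=1,\dots,n$, so it cannot by itself feed your material-derivative identity. The missing directions $j\notin I$, and hence the clean split $v_j\partial_j m_{A,I}-m_{A,I}\partial_j v_j$, appear only after you apply to the \emph{entire} first double sum the consequence of \eqref{eq:sn5} proved at the start of Appendix C, namely $\partial_j M_{A,I}=\sum_{i\in I}\mathbbm{1}_{\{j\notin I\setminus\{i\}\}}(-1)^{O_I(i)+O_{I\setminus\{i\}}(j)}\partial_i M_{A,(I\setminus\{i\})\cup\{j\}}$, which converts its $v$-independent part into $v_j\partial_j M_{A,I}$, to be combined with the $-m_{A,I}\partial_j(hv_j)$ generated by $\partial_t(hm_{A,I})$ through \eqref{eq:sn1}. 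Since this is precisely what your $K=I\cup\{j\}$ regrouping amounts to, the slip is a fixable piece of bookkeeping rather than a flaw in the approach.
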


We can prove the above proposition by just using \eqref{eq:sn5}. It is easy to verify that this system is symmetric. If we set $W=(\tau,d_{\alpha},v_i,m_{A,I})
\in\mathbb{R}^{n+m+\binom{m+n}{n}}$, then the equations can be written as
$$\partial_t W + \sum_{j}A_j(W)\partial_j W =0,$$
where $A_j(W)$ is a symmetric matrix, and more surprisingly, it is a linear function of $W$.
This is exactly the form (\ref{burgers}) announced in the introduction.
Notice that this system does not require any restriction on the range of $W$! In particular the
variable $\tau$ may admit positive, negative or null values. This is a very remarkable situation,
if we compare with more classical nonlinear hyperbolic systems, such as the Euler equations of
gas dynamics (where typically $\tau$ should admit only positive values).
\\
\\
Now let us prove that the two system are equivalent when initial data satisfies \eqref{eq:sn5}

\begin{proposition}
Suppose the initial data for \eqref{eq:snc1}-\eqref{eq:snc4} satisfies \eqref{eq:sn5}, i.e.,
$$\sum_{i\in I}(-1)^{O_I(i)}\partial_{i}\Big(\tau^{-1}m_{A',I\setminus\{i\}}\Big)=0,
\quad  2\leq |I|=|A'|+1\leq r+1$$
then the corresponding smooth solutions satisfy \eqref{eq:sn1}-\eqref{eq:sn5}.
\end{proposition}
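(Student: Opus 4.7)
The plan is to establish two things: \textbf{(i)} that under the flow of \eqref{eq:snc1}-\eqref{eq:snc4}, the ``curl-like'' quantities $C_{A',I}:=\sum_{i\in I}(-1)^{O_I(i)}\partial_i(\tau^{-1}m_{A',I\setminus\{i\}})$ (for $2\le|I|=|A'|+1\le r+1$) remain identically zero if they do so at $t=0$; and \textbf{(ii)} that whenever every $C_{A',I}$ vanishes, the rescaled variables $h=\tau^{-1}$, $D_\alpha=d_\alpha/\tau$, $P_i=v_i/\tau$, $M_{A,I}=m_{A,I}/\tau$ solve the divergence-form system \eqref{eq:sn1}-\eqref{eq:sn4}.

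For step (ii), I would reverse, equation by equation, the Leibniz computation that produced the previous proposition. Substituting the rescaled variables into each conservative equation and expanding, the resulting terms organise into three groups: (a) the corresponding non-conservative equation among \eqref{eq:snc1}-\eqref{eq:snc4}, which holds by assumption; (b) a scalar multiple of $\partial_t\tau+v_j\partial_j\tau-\tau\partial_jv_j$, which vanishes by \eqref{eq:snc1}; and (c) flux remainders that, after invoking the cofactor-expansion identities \eqref{eq:np1}-\eqref{eq:np3}, assemble into a linear combination of the $C_{A',I}$'s. Equation \eqref{eq:sn1} follows from \eqref{eq:snc1} alone, and the constraints are genuinely needed only to close \eqref{eq:sn2}-\eqref{eq:sn4}.

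Step (i), the main obstacle, is a standard ``propagation of involutions'' argument, but combinatorially delicate here. I would differentiate $C_{A',I}$ in time, commute $\partial_t$ with $\partial_i$, and then substitute \eqref{eq:snc4} for $\partial_t m_{A',I\setminus\{i\}}$ together with \eqref{eq:snc1} for $\partial_t\tau$. The resulting expression contains bare second derivatives of $d,v,m$ weighted by signed sums of precisely the antisymmetric type that made the curl-free relation for the minors of $F$ vanish in the first place; those cancel again. What remains is a linear combination of the $C_{A'',J}$'s with coefficients that are smooth polynomials in $\tau,d,v,m,\tau^{-1}$, yielding a closed homogeneous first-order system of the schematic form $\partial_t C_{A',I}+v_j\partial_j C_{A',I}=\sum_{A'',J}\Lambda^{A'',J}_{A',I}\,C_{A'',J}$. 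A standard energy/Gronwall estimate applied to $\sum_{A',I}\|C_{A',I}\|_{L^2}^2$ on a compact domain of determinacy then forces $C_{A',I}\equiv 0$ throughout the lifespan of the smooth solution. The genuinely delicate point is verifying that no stray first derivative of $d, v$, or $m$ survives outside the packaging into some $C_{A'',J}$; I would dispatch this by checking $|I|=2$ directly first, to fix sign conventions, and then extending the sign bookkeeping via Laplace expansion to general $|I|$.
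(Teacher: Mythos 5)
Your proposal follows essentially the same route as the paper: the paper likewise differentiates the constraint quantities $\sigma_{A',I}=\sum_{i\in I}(-1)^{O_I(i)}\partial_i(\tau^{-1}m_{A',I\setminus\{i\}})$ in time, uses \eqref{eq:snc1} and \eqref{eq:snc4} to close them into a homogeneous linear transport system (with coefficients given by $\partial_i d_\alpha$ and $\partial_i v_j$, rather than the polynomial coefficients you describe, a harmless discrepancy), applies an $L^2$/Gronwall estimate to conclude $\sigma_{A',I}\equiv 0$, and then recovers \eqref{eq:sn1}--\eqref{eq:sn4} by reversing the earlier Leibniz computation. So your plan is correct and matches the paper's argument in both of its steps.
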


\begin{proof}
We only need to proof that the smooth solutions always satisfy \eqref{eq:sn5} provided that initial data satisfies it. For $2\leq |I|=|A'|+1\leq r+1$, let us denote $$\sigma_{A',I}=\sum_{i\in I}(-1)^{O_I(i)}\partial_{i}\Big(\tau^{-1}m_{A',I\setminus\{i\}}\Big)=0$$
then by \eqref{eq:snc1},\eqref{eq:snc4}, we have
\begin{equation*}
\begin{array}{r@{}l}
{\partial_t\sigma_{A',I} }\;&\displaystyle{ = \;  \sum_{i\in I}(-1)^{O_I(i)}\partial_{i}\Big(\tau^{-1}\partial_t m_{A',I\setminus\{i\}} -\tau^{-2} m_{A',I\setminus\{i\}}\partial_t\tau\Big) } \\
&\;\displaystyle{ = \; -v_j\partial_j\sigma_{A',I}  + \sum_{\alpha,i\atop \alpha\in A',i\in I}(-1)^{O_{A'}(\alpha)+O_I(i)} \sigma_{A'\setminus\{\alpha\},I\setminus\{i\}}\partial_{i}d_{\alpha} }\\
&\;\displaystyle{  \;\;\; -\sum_{i,j\atop i\in I,j\notin I\setminus\{i\}}(-1)^{O_{I\setminus\{i\}}(j)+O_I(i)}\sigma_{A',(I\setminus\{i\})\bigcup\{j\}}\partial_{i}v_j }
\end{array}
\end{equation*}
Then we have the following estimate,
$$\partial_{t}\sum_{A',I}\int\sigma_{A',I}^2 \leq C(\|\nabla v\|_{\infty},\|\nabla d\|_{\infty})\Big(\sum_{A',I}\int\sigma_{A',I}^2\Big)$$
Since the initial data $\sigma_{A',I}(0)=0$, then by Gronwall's lemma, we have $\sigma_{A',I}\equiv0$. With these equalities, it is easy to prove the statement just by doing the reverse computation as in the previous proposition.

\end{proof}

Now let us look at the connection with the original system. It is obvious that the non-conservative form of the augmented system is symmetric, thus, the initial value problem is at least locally well-posed. But for the original system, this kind of property is not obvious. However, we can show that, the augmented system is equivalent to the original system if the initial value satisfy the following constraints
\begin{equation}
P_i=F_{\alpha i}D_{\alpha},\;\;\;h=\sqrt{D^2+P^2+\xi(F)},\;\;\;M_{A,I}=[F]_{A,I}
\end{equation}
or, in the non-conservative form,
\begin{equation}\label{eq:non-cons}
\tau v_i=m_{\alpha i}d_{\alpha},\;\;\;1=d_{\alpha}^2+v_i^2+\tau^2+m_{A,I}^2,\;\;\; m_{A,I}=\tau[F]_{A,I}
\end{equation}
Now let us denote
$$
\lambda=\frac{1}{2}(\tau^2+v_i^2+d_{\alpha}^2+m_{A,I}^2-1),\;\;\;
\omega_i=\tau v_i-m_{\alpha i}d_{\alpha}
$$
$$
\varphi_{A,I}^{\alpha}=\sum_{i\in I}(-1)^{O_A(\alpha)+O_I(i)}m_{A,I\setminus\{i\}}m_{\alpha i} - \mathbbm{1}_{\{\alpha\notin A\}}\tau m_{A\bigcup\{\alpha\},I}
$$
$$
\psi_{A,I}^{i}=\sum_{\alpha\in A}(-1)^{O_A(\alpha)+O_I(i)}m_{A\setminus\{\alpha\},I}m_{\alpha i} - \mathbbm{1}_{\{i\notin I\}}\tau m_{A,I\bigcup\{i\}}
$$
It is obvious that $(\tau,v_i,d_{\alpha},m_{A,I})$ satisfy the above constraints \eqref{eq:non-cons} if and only if $\lambda$, $\omega_i$, $\varphi_{A,I}^{\alpha}$,$\psi_{A,I}^{i}$ vanish for all possible choice of $A,I,\alpha,i$. Furthermore, we can show that the algebraic constraints \eqref{eq:non-cons} are preserved by the non-conservative system \eqref{eq:snc1}-\eqref{eq:snc4}. First, we have the following lemma.

\begin{lemma}
If $(\tau,v_i,d_{\alpha},m_{A,I})$ solves the non-conservative system \eqref{eq:snc1}-\eqref{eq:snc4}, then
$\lambda$, $\omega_i$, $\varphi_{A,I}^{\alpha}$,$\psi_{A,I}^{i}$ as defined above satisfy the following equalities,
\begin{equation}\label{eq:cons-1}
\partial_t \omega_i=\omega_i\partial_j v_j-\omega_j\partial_i v_j-v_j\partial_j\omega_i +\tau\partial_i\lambda +\sum_{A,I,j}\mathbbm{1}_{\{j\in I\}}(-1)^{O_{I}(j)+O_{I\setminus\{j\}}(i)}\partial_j m_{A,I}\psi^{i}_{A,I\setminus\{j\}}
\end{equation}
\begin{multline}\label{eq:cons-2}
\partial_t\lambda=-v_j\partial_j\lambda + \tau\partial_i\omega_i +\sum_{A,|I'|\geq 2,i,j}\mathbbm{1}_{\{i,j\in I'\}}(-1)^{O_{I'}(i)+O_{I'}(j)}m_{A,I'\setminus\{j\}}\partial_j\left(\frac{m_{A,I'\setminus\{i\}}\omega_i}{\tau}\right) \\ + \sum_{A',|I|\geq 2,\alpha,i}\mathbbm{1}_{\{i\in I\}}(-1)^{O_{A'}(\alpha)+O_{I}(i)}m_{A',I\setminus\{i\}}\partial_i\left(\frac{
\varphi_{A',I}^{\alpha}d_{\alpha}}{\tau}\right)
\end{multline}
\begin{multline}\label{eq:cons-3}
\partial_t \varphi^{\alpha}_{A,I} =2\varphi^{\alpha}_{A,I}\partial_j v_j-v_j\partial_j\varphi^{\alpha}_{A,I} -\sum_{j,k}\mathbbm{1}_{\{j\in I,k\notin I\setminus\{j\}\}}(-1)^{O_{I\setminus\{j\}}(k)+O_{I}(j)} \varphi_{A,(I\setminus\{j\})\bigcup\{k\}}^{\alpha}\partial_{j}v_k \\ +\sum_{\beta,j}\mathbbm{1}_{\{\beta\in A,j\in I\}}(-1)^{O_{A}(\alpha)+O_{A}(\beta)+O_{A\setminus\{\beta\}}(\alpha)+O_I(j)}\varphi_{A\setminus\{\beta\},I\setminus\{j\}}^{\alpha} \partial_{j}d_{\beta}
\end{multline}
\begin{multline}\label{eq:cons-4}
\partial_t\psi_{A,I}^{i}=2\psi_{A,I}^{i}\partial_j v_j-v_j\partial_j\psi_{A,I}^{i} -\sum_{k}(-1)^{O_I(i)+O_I(k)}\psi_{A,I}^{k}\partial_iv_k \\ + \sum_{\beta\in A,j\in I}(-1)^{O_A(\beta) + O_I(j) +O_I(i)+O_{I\setminus\{j\}}(i)}\psi_{A\setminus\{\beta\},I\setminus\{j\}}^{i}\partial_jd_{\beta} \\ -\sum_{j,k}\mathbbm{1}_{\{j\in I,k\notin I\setminus\{j\}\}}(-1)^{O_I(i)+O_{I}(j)+O_{I\setminus\{j\}}(k)+O_{(I\setminus\{j\})\bigcup\{k\}}(i)} \psi_{A,(I\setminus\{j\})\bigcup\{k\}}^{i}\partial_{j}v_k
\end{multline}

\end{lemma}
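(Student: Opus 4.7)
The plan is to verify each of the four evolution equations by direct computation: differentiate the definitions of $\lambda$, $\omega_i$, $\varphi^{\alpha}_{A,I}$, $\psi^{i}_{A,I}$ in time, substitute the non-conservative equations \eqref{eq:snc1}--\eqref{eq:snc4} for $\partial_t\tau$, $\partial_td_\alpha$, $\partial_tv_i$, $\partial_tm_{A,I}$, and reorganize the resulting sums until everything on the right-hand side is visibly a linear combination of the constraint quantities (and their spatial derivatives), with coefficients that are polynomial in the solution $(\tau,v,d,m)$ and in $\partial v,\partial d$. Since \eqref{eq:cons-3} and \eqref{eq:cons-4} involve only $\varphi$ and $\psi$ respectively on the right, and \eqref{eq:cons-1}--\eqref{eq:cons-2} couple $\lambda,\omega,\varphi,\psi$ in a triangular fashion, once all four identities are verified, the desired propagation of the constraints by Gr\"onwall follows immediately (which is what the next proposition will need).

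I would start with $\lambda$, as it is algebraically the simplest: by the product rule, $\partial_t\lambda=\tau\partial_t\tau+v_i\partial_tv_i+d_\alpha\partial_td_\alpha+m_{A,I}\partial_tm_{A,I}$. Each substitution produces a transport part, which collects to $-v_j\partial_j\lambda$, and a set of ``quadratic in minors'' contributions. The key observation is that a term like $m_{A,I}m_{A\setminus\{\alpha\},I\setminus\{i\}}\partial_id_\alpha$ appearing from the $m\,\partial_tm$ piece can be matched, up to an $\omega_i$-correction and a $\tau\partial_i\omega_i$ correction, against the $d_\alpha\partial_td_\alpha$ piece: this is exactly how the $\tau\partial_i\omega_i$ and the $\varphi$-correction in \eqref{eq:cons-2} arise. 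Similarly, the cross terms between $m_{A,I}m_{A,(I\setminus\{j\})\cup\{i\}}$ and $v_i$-pieces pair off into $\psi^{\cdot}$-combinations. The computation for $\omega_i=\tau v_i-m_{\alpha i}d_\alpha$ proceeds in the same spirit and yields the transport piece $-v_j\partial_j\omega_i$, the $\omega_i\partial_jv_j-\omega_j\partial_iv_j$ rotation/divergence correction, the $\tau\partial_i\lambda$ term coming from the $\sum_{A,I}m_{A,I}\partial_im_{A,I}+\tau\partial_i\tau+v_j\partial_jv_j-d_\alpha\partial_id_\alpha$ telescoping, and a residual $\psi$-contribution from the minor sums. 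For $\varphi^{\alpha}_{A,I}$ and $\psi^{i}_{A,I}$, one differentiates the defining sums of products of $m$'s and substitutes \eqref{eq:snc4} and \eqref{eq:snc1}; after grouping the ``transport'' piece $-v_j\partial_j\varphi$ (resp.\ $-v_j\partial_j\psi$), one is left with quadratic sums of minors that must be reassembled, via Laplace expansion along an inserted or removed index, into $\varphi$-combinations (resp.\ $\psi$-combinations) with the correct signs.

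The main obstacle is purely combinatorial: systematically tracking the signs $(-1)^{O_A(\alpha)+O_I(i)}$ through every index swap, insertion and deletion, and recognizing the Plücker/Laplace-expansion identities that collapse long alternating sums of products of minors into the compact $\varphi$, $\psi$ expressions. In practice I would fix notation so that whenever an index is inserted into a set $A$ or $I$, the corresponding $O$-function is recomputed relative to the \emph{enlarged} set (as is already done in \eqref{eq:np3}), and then invoke, on each quadratic-in-$m$ block, the single identity
\begin{equation*}
\sum_{p}(-1)^{p}\,[F]_{A\setminus\{\alpha_p\},I'}\,F_{\alpha_p j}=\pm[F]_{A,I'\cup\{j\}}\qquad(j\notin I'),
\end{equation*}
which is exactly \eqref{eq:np3} interpreted on the lifted variables $m_{A,I}$. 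Once this bookkeeping is in place, the four evolution equations \eqref{eq:cons-1}--\eqref{eq:cons-4} emerge by cancellation, with no further input beyond the system \eqref{eq:snc1}--\eqref{eq:snc4}.
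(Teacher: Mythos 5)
Your plan is exactly the route taken in the paper's Appendix D: differentiate $\lambda$, $\omega_i$, $\varphi^{\alpha}_{A,I}$, $\psi^{i}_{A,I}$ directly, substitute \eqref{eq:snc1}--\eqref{eq:snc4}, and regroup the quadratic-in-$m$ sums via the mod-2 identities for the $O$-functions, indicator-function splittings, and the Laplace-expansion identity \eqref{eq:np3} lifted to the $m_{A,I}$, until the residuals assemble into the $\varphi$- and $\psi$-combinations (with the same triangular coupling you note). The only caveat is that the substance of the lemma lies in actually carrying out that lengthy sign bookkeeping, which your proposal outlines correctly but does not execute.
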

The proof of this lemma requires very lengthy and tedious computation. Interesting readers can refer to appendix D for the details of the proof. By the above lemma, we can show that the algebraic constraints are preserved. We summarise our result in the following proposition.

\begin{proposition}
Supposed $(\tau,v_i,d_{\alpha},m_{A,I})$ is a solution to the non-conservative equations \eqref{eq:snc1}-\eqref{eq:snc4} and the initial data satisfies the constraints
$$\tau v_i=m_{\alpha i}d_{\alpha},\;\;\;1=d_{\alpha}^2+v_i^2+\tau^2+m_{A,I}^2,\;\;\; m_{A,I}=\tau[F]_{A,I}$$
where $F_{\alpha i}=\tau^{-1}m_{\alpha i}$, then the above constraints are always satisfied.

\end{proposition}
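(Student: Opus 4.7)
The plan is to reduce the constraint-preservation statement to a Gronwall argument for a homogeneous linear system satisfied by the constraint quantities. The first observation is that the three algebraic relations in \eqref{eq:non-cons} are jointly equivalent to the simultaneous vanishing of the four families $\lambda$, $\omega_i$, $\varphi^{\alpha}_{A,I}$, $\psi^{i}_{A,I}$ introduced just before the preceding lemma. Indeed $\omega_i = 0$ is exactly $\tau v_i = m_{\alpha i} d_\alpha$, $\lambda = 0$ is the normalization, and the vanishing of $\varphi^\alpha_{A,I}$ and $\psi^i_{A,I}$ encodes the Laplace-expansion identities that force $m_{A,I} = \tau[F]_{A,I}$ with $F_{\alpha i} = \tau^{-1}m_{\alpha i}$. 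So it suffices to prove that these four quantities, if zero at $t=0$, remain zero throughout the life span of the smooth solution $(\tau, d, v, m)$.

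The second step is to invoke the preceding lemma, which is the technical core of the argument: each of \eqref{eq:cons-1}--\eqref{eq:cons-4} is a transport-type equation in which \emph{every} term on the right-hand side is linear in the quadruple $(\lambda, \omega, \varphi, \psi)$, with coefficients built from $(\tau, d, v, m)$ and its first-order spatial derivatives. In other words, the constraint evolution closes on itself, with no free source term depending only on the solution; this is precisely the structural property that makes constraint preservation possible.

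Third, I would perform a standard $L^2$ energy estimate. Define
\[
E(t) := \int_{\mathbb{R}^n}\Bigl( \lambda^2 + \sum_i \omega_i^2 + \sum_{\alpha,A,I}(\varphi^{\alpha}_{A,I})^2 + \sum_{i,A,I}(\psi^{i}_{A,I})^2 \Bigr) dx .
\]
Multiplying each equation of the lemma by the corresponding constraint variable and integrating, the convective terms $v_j\partial_j(\cdot)$ integrate by parts and contribute at most $\|\nabla v\|_\infty E$; each remaining term is a product of two constraint variables against a coefficient involving $(\tau, d, v, m)$ and its first derivatives, and so Cauchy--Schwarz yields
\[
\frac{dE}{dt} \;\le\; C\bigl(\|(\tau,d,v,m)\|_\infty,\, \|\nabla(\tau,d,v,m)\|_\infty,\, \|\tau^{-1}\|_\infty\bigr)\, E(t).
\]
Since $E(0) = 0$ by hypothesis, Gronwall's lemma forces $E \equiv 0$ on the interval of smooth existence, giving the desired constraint preservation.

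The real obstacle lies not in this Gronwall step but inside the preceding lemma: one must verify by a somewhat lengthy symbolic calculation that the evolution of the constraint variables genuinely closes, i.e., that the numerous would-be free source terms cancel through combinatorial identities on the sign factors $O_A(\alpha), O_I(i)$ and through Laplace-expansion relations for the minors. A minor technical issue in Step~3 is the factor $\tau^{-1}$ appearing inside \eqref{eq:cons-2}; but on any short time interval of smooth existence starting from data compatible with \eqref{eq:non-cons} one has $\tau = 1/h > 0$ uniformly bounded away from zero, so this singularity is harmless and the Gronwall estimate goes through.
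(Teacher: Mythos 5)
Your overall skeleton matches the paper's proof: reduce the constraints to the vanishing of $(\lambda,\omega_i,\varphi^{\alpha}_{A,I},\psi^{i}_{A,I})$, invoke the preceding lemma, and conclude by a linear uniqueness/Gronwall argument. But the decisive analytic step is glossed over in a way that does not work as stated. Your premise that each of \eqref{eq:cons-1}--\eqref{eq:cons-4} is a transport equation whose right-hand side is linear in the quadruple itself (with coefficients depending only on $(\tau,d,v,m)$ and its first derivatives) is false for \eqref{eq:cons-1} and \eqref{eq:cons-2}: the $\omega_i$-equation contains $\tau\partial_i\lambda$, and the $\lambda$-equation contains $\tau\partial_i\omega_i$ together with the terms $m_{A,I'\setminus\{j\}}\partial_j\bigl(m_{A,I'\setminus\{i\}}\omega_i/\tau\bigr)$ and $m_{A',I\setminus\{i\}}\partial_i\bigl(\varphi^{\alpha}_{A',I}d_{\alpha}/\tau\bigr)$, i.e.\ \emph{first-order derivatives} of the constraint quantities. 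Consequently the constraint evolution is itself a first-order PDE system, and your unweighted energy $E$ does not satisfy $dE/dt\le CE$ by Cauchy--Schwarz: after multiplying by $\lambda$ and $\omega_i$, the cross terms pair $\tau\partial_i\lambda$ against a principal part weighted by $Z_{ij}=\xi(F)(I_n+F^TF)^{-1}_{ij}$ (once $m=\tau[F]$ is available) rather than by the identity, so they do not combine into a divergence, and the term $\int\lambda\,m_{A',I\setminus\{i\}}\partial_i(\varphi^{\alpha}_{A',I}d_{\alpha}/\tau)$ leaves either $\nabla\varphi$ or, after integration by parts, $\nabla\lambda$, neither of which is controlled by $E$. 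In short, the principal part of the constraint system is not symmetric in your variables, and "standard $L^2$ estimate" is precisely the point that needs an argument.

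The paper closes this gap hierarchically, and you would need to do the same (or an equivalent weighted estimate). First, \eqref{eq:cons-3}--\eqref{eq:cons-4} form a \emph{closed} linear system in $(\varphi,\psi)$ alone with no derivative coupling, so zero initial data gives $\varphi\equiv\psi\equiv0$; this already yields $m_{A,I}=\tau[F]_{A,I}$ with $F_{\alpha i}=\tau^{-1}m_{\alpha i}$ and removes the $\partial\varphi$-coupling from the $\lambda$-equation. Second, the remaining pair $(\lambda,\omega)$ then satisfies the acoustic-type system $D_t\lambda=\tau Z_{ij}\partial_j\omega_i+f_i\omega_i$, $D_t\omega_i=\tau\partial_i\lambda+c_{ij}\omega_j$, which is \emph{not} symmetric as written; the paper symmetrizes it by $\widetilde\omega=Q\omega$ with $Q=\sqrt{Z}$ (equivalently one may run a $Z$-weighted energy such as $\int\lambda^2+Z_{ij}\omega_i\omega_j$), and only then does uniqueness for linear symmetric hyperbolic systems give $\lambda\equiv\omega\equiv0$. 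Your closing remark on $\tau^{-1}$ is fine and is implicitly needed in the paper as well, but without the two-stage reduction and the symmetrizer $Q$ (or a suitable weight), your Gronwall inequality is not established.
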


\begin{proof}
Let us denote
$$
\lambda=\frac{1}{2}(\tau^2+v_i^2+d_{\alpha}^2+m_{A,I}^2-1),\;\;\;
\omega_i=\tau v_i-m_{\alpha i}d_{\alpha}
$$
$$
\varphi_{A,I}^{\alpha}=\sum_{i\in I}(-1)^{O_A(\alpha)+O_I(i)}m_{A,I\setminus\{i\}}m_{\alpha i} - \mathbbm{1}_{\{\alpha\notin A\}}\tau m_{A\bigcup\{\alpha\},I}
$$
$$
\psi_{A,I}^{i}=\sum_{\alpha\in A}(-1)^{O_A(\alpha)+O_I(i)}m_{A\setminus\{\alpha\},I}m_{\alpha i} - \mathbbm{1}_{\{i\notin I\}}\tau m_{A,I\bigcup\{i\}}
$$
It is enough to show that $\lambda$, $\omega_i$, $\varphi_{A,I}^{\alpha}$,$\psi_{A,I}^{i}$ always vanish. Since $\varphi_{A,I}^{\alpha}$,$\psi_{A,I}^{i}$ satisfy \eqref{eq:cons-3} and \eqref{eq:cons-4}, which are linear symmetric system of PDEs when we see $(\tau,v_i,d_{\alpha},m_{A,I})$ as fixed functions. It is easy to know that 0 is the unique solution when initial data is 0. So we get $\varphi_{A,I}^{\alpha}=\psi_{A,I}^{i}=0$ for all possible choice of $A,I,\alpha,i$. Therefore, we know that $m_{A,I}=\tau[F]_{A,I}$, where $F_{\alpha i}=\tau^{-1}m_{\alpha i}$. So, by \eqref{eq:cons-1} and \eqref{eq:cons-2}, we know that $\lambda$, $\omega_i$ solves the following linear system of PDEs
\begin{equation}
D_t \lambda = \tau Z_{ij}\partial_j\omega_i + f_i \omega_i
\end{equation}
\begin{equation}
D_t \omega_i = \tau\partial_i \lambda + c_{ij} \omega_j
\end{equation}
where $D_t=\partial_t + v\cdot\nabla$, $Z_{ij}=\xi(F)(I_n+F^TF)^{-1}_{ij}$ is a positive definite matrix, $f_i=\partial_j Z_{ij}$, $c_{ij}=\delta_{ij}\nabla\cdot v-\partial_i v_j$. This system is of hyperbolic type and looks very like the acoustic waves. Now, since $Z_{ij}$ is positive definite, we can find a positive definite matrix $Q$ such that $Z=Q^2$. Now we do the change of variable $\widetilde{\omega}_{i}=Q_{ij}\omega_j$, then $\lambda$, $\widetilde{\omega}_{i}$ should solve the following linear symmetric system of PDEs
\begin{equation}
D_t \lambda = \tau Q_{ij}\partial_j\widetilde{\omega}_i + \widetilde{f}_i \widetilde{\omega}_i
\end{equation}
\begin{equation}
D_t \widetilde{\omega}_i = \tau Q_{ij}\partial_j \lambda + \widetilde{c}_{ij} \widetilde{\omega}_j
\end{equation}
where
$$\widetilde{f}_i=\tau Z_{jk}\partial_kQ^{-1}_{ij} + Q^{-1}_{ij}f_j,\;\;\;\widetilde{c}_{ij}=(D_t Q_{ik} + Q_{il}c_{lk})Q^{-1}_{kj}$$
By the standard method of analysis of PDEs, it is easy to know that this linear symmetric system has a unique solution. Since $\lambda=\widetilde{\omega}_i=0$ at $t=0$, so we have $\lambda\equiv\widetilde{\omega}_i\equiv0$. So we have $\omega_i\equiv0$, which completes the proof.

\end{proof}


\section{Toward mean curvature motions in the Euclidean space}

We conclude this paper by explaining how mean curvature motions in the
Euclidean space are related to our study of extremal surfaces in the Minkowski space.
This can be done very simply by the elementary
quadratic change of time $\theta=t^2/2$ in the extremal surface equations \eqref{eq:NG}. Let us work in the case where $X^{0}(t,x)=t$. We do the change of coordinate $\theta=t^2/2$, and in the new coordinate system, the extremal surface is denoted by $X^{M}(\theta,x)$. The chain rule tells us
$$\partial_{t}X^0\equiv1,\quad\partial_{t}X^{M}=\theta'\partial_{\theta}X^{M},\quad M=1,\ldots,m+n$$
Now for fixed $\theta$, the slice of $X(\theta,x)=(X^1(\theta,x),\ldots,X^{m+n}(\theta,x))$ is a $n$ dimensional manifold $\Sigma$ in $\mathbb{R}^{m+n}$. Let us denote the induced metric on $\Sigma$ by $g_{ij}=\langle\partial_{i}X,\partial_{j}X\rangle$, $i,j=1,\ldots,n$. Denote $g=\det{g_{ij}}$, $g^{ij}$ the inverse of $g_{ij}$.
Then we can get that
$$G_{00}=-1+{\theta'}^2|\partial_{\theta}X|^2,\; G_{0i}=G_{i0}=\theta'\langle\partial_{\theta}X,\partial_{i}X\rangle=\theta'h_i,\; G_{ij}=g_{ij}$$
$$G=\det\left(
                \begin{array}{cc}
                  -1+{\theta'}^2|\partial_{\theta}X|^2 & \theta'h_j \\
                  \theta'h_i & g_{ij} \\
                \end{array}
              \right)=-\left[1+ 2\theta\left(h_ih_jg^{ij}-|\partial_{\theta}X|^2\right)\right]g$$
$$\sqrt{-G}=\sqrt{g}\left[1+\theta\left(h_ih_jg^{ij}-|\partial_{\theta}X|^2\right) + \mathcal{O}(\theta^2)\right]  $$
$$G^{00}=-1 + 2\theta\left(h_ih_jg^{ij}-|\partial_{\theta}X|^2\right)+ \mathcal{O}(\theta^2),\;G^{0i}=G^{i0}=\theta'g^{ij}h_j+ \mathcal{O}(\theta),\;G^{ij}=g^{ij} + \mathcal{O}(\theta)$$
Therefore, \eqref{eq:NG} can be rewritten as
\begin{equation*}
\begin{array}{r@{}l}
{0 \;} & \displaystyle{ =\partial_{t}\left(\sqrt{-G}G^{00}\right) + \partial_{i}\left(\sqrt{-G}G^{i0}\right) }\\
& \displaystyle{= \theta'\left[-\partial_{\theta}\left(\sqrt{g}\right) + \sqrt{g}\left(h_ih_jg^{ij}-|\partial_{\theta}X|^2\right)+ \partial_{i}\left(\sqrt{g}g^{ij}h_j\right) \right] + \mathcal{O}(\theta) }
\end{array}
\end{equation*}
\begin{equation*}
\begin{array}{r@{}l}
{0\;} & \displaystyle{= \partial_{t}\left(\sqrt{-G}G^{00}\partial_{t}X^{M}\right) + \partial_{i}\left(\sqrt{-G}G^{i0}\partial_{t}X^{M}\right)+\partial_{i}\left(\sqrt{-G}G^{ij}\partial_{j}X^{M}\right) }\\
& \displaystyle{=  -\partial_{t}\left(\theta'\sqrt{g}\partial_{\theta}X^{M}\right) + \partial_{i}\left({\theta'}^2\sqrt{g}g^{ij}h_j\partial_{\theta}X^{M}\right)+\partial_{i}\left(\sqrt{g}g^{ij}\partial_{j}X^{M}\right) + \mathcal{O}(\theta)}\\
& \displaystyle{= -\sqrt{g}\partial_{\theta}X^{M} +  \partial_{i}\left(\sqrt{g}g^{ij}\partial_{j}X^{M}\right) + \mathcal{O}(\theta) }
\end{array}
\end{equation*}
In the regime $\theta\ll1$, we have the following equations
\begin{equation}\label{eq:mcf-g}
\partial_{\theta}\left(\sqrt{g}\right) +\sqrt{g}|\partial_{\theta}X|^2 = \partial_{i}\left(\sqrt{g}g^{ij}h_j\right) +\sqrt{g}h_ih_jg^{ij}
\end{equation}
\begin{equation}\label{eq:mcf}
\partial_{\theta}X^{M}=\frac{1}{\sqrt{g}}\partial_{i}\left(\sqrt{g}g^{ij}\partial_{j}X^{M}\right),\qquad M=1,
\ldots,m+n
\end{equation}
\eqref{eq:mcf} is exactly the equation for the $n$ dimensional mean curvature flow in $\mathbb{R}^{m+n}$, and \eqref{eq:mcf-g} is just a consequence of \eqref{eq:mcf}.
\begin{remark}
It can be easily shown that \eqref{eq:mcf} is equivalent to the following equation
\begin{equation}
\partial_{\theta}X^{M}=g^{ij}\partial_{ij}X^{M}-g^{ij}g^{kl}\partial_{k}X^{M}\partial_{l}X^{N}\partial_{ij}X_{N}
\end{equation}
Therefore,
\begin{equation*}
\begin{array}{r@{}l}
 \displaystyle{ h_i  = \partial_{\theta} X^M \partial_{i} X_{M} } & \displaystyle{ = \left(g^{jk}\partial_{jk}X^{M}-g^{jk}g^{lm}\partial_{l}X^{M}\partial_{m}X^{N}
\partial_{jk}X_{N}\right)\partial_{i} X_{M}}\\
& \displaystyle{= g^{jk}\partial_{jk}X^{M}\partial_{i} X_{M} - g^{jk}g^{lm}g_{il}\partial_{m}X^N \partial_{jk} X_N}\\
& =0
\end{array}
\end{equation*}
As a consequence, we have
\begin{equation*}
\begin{array}{r@{}l}
 \displaystyle{ \partial_{\theta}\left(\sqrt{g}\right) } & \displaystyle{ = \frac{1}{\sqrt{g}}gg^{ij}\partial_{i\theta}X^M\partial_{j}X_{M}}\\
& \displaystyle{=\partial_{i}(\sqrt{g}g^{ij}\partial_{\theta}X^M\partial_{j}X_{M}) - \partial_{\theta}X^M \partial_{i}\left(\sqrt{g}g^{ij}\partial_{j}X_{M}\right)}\\
& \displaystyle{= -\sqrt{g}|\partial_{\theta}X|^2 }
\end{array}
\end{equation*}
which is exactly \eqref{eq:mcf-g} since $h_i=0$, $i=1,\ldots,n$.
\end{remark}
So, we may expect to perform for the mean-curvature flow the same type of analysis we did
for the extremal surfaces, which we intend to do in a future work.


\section{Appendix A}\label{apdx-a}

Let us denote
$$F^{\alpha}_{\ i}=\partial_{i}X^{p+\alpha},\;\;V^{\alpha}=\partial_{t}X^{p+\alpha},\;\;\xi_{ij}=\delta_{ij}+F^{\alpha}_{\ i}F_{\alpha j},\;\;\zeta_{\alpha\beta}=\delta_{\alpha\beta} + F_{\alpha}^{\ i}F_{\beta i},$$
and let $\xi^{ij},\zeta^{\alpha\beta}$ be respectively the inverse of $\xi_{ij}, \zeta_{\alpha\beta}$, $\xi=\det\xi_{ij}=\det{\zeta_{\alpha\beta}}$, $i,j=1,\ldots,n$, $\alpha,\beta=1,\ldots,m$.
Since $\xi_{ij}F_{\alpha}^{\ j}=F_{\alpha i}+F^{\beta}_{\ i}F_{\beta j}F_{\alpha}^{\ j}=F^{\beta}_{\ i}\zeta_{\beta \alpha}$, we have $\xi^{ij}F_{\alpha j}=\zeta^{\alpha\beta}F_{\beta i}$. By using the above notations, the induced metric $G_{\mu\nu}$ has the following expression,
$$(G_{\mu\nu})=\left(
                \begin{array}{cc}
                  -1+|V|^2 & F_{\ j}^{\alpha}V_{\alpha} \\
                  F_{\ i}^{\alpha}V_{\alpha}  & \xi_{ij} \\
                \end{array}
              \right),\ G=-\xi\left(1-|V|^2+\xi^{ij}F^{\alpha}_{\ i}F^{\beta}_{\ j}V_{\alpha}V_{\beta}\right)=-\xi\left(1-\zeta^{\alpha\beta}
V_{\alpha}V_{\beta}\right)$$
$$(G^{\mu\nu})=G^{-1}\xi\left(
                \begin{array}{cc}
                  1 & -\zeta^{\alpha\beta}V_{\alpha}F_{\beta j} \\
                  -\zeta^{\alpha\beta}V_{\alpha}F_{\beta i}  & (-1+|V|^2)\xi^{ij} +(\xi^{ik}\xi^{jl}-\xi^{ij}\xi^{kl})F_{\ k}^{\alpha}F_{\ l}^{\beta}V_{\alpha}V_{\beta} \\
                \end{array}
              \right)$$
Now let's start looking at the equation \eqref{eq:NG}. The equation for $X^{i},i=1,\ldots,n$, reads
$$\partial_{t}\left(\frac{\sqrt{\xi}\zeta^{\alpha\beta}
V_{\alpha}F_{\beta i}}{\sqrt{1-\zeta^{\alpha\beta}
V_{\alpha}V_{\beta}}}\right) - \partial_{j}\left\{\frac{\sqrt{\xi}\left[(-1+|V|^2)\xi^{ij} +(\xi^{ik}\xi^{jl}-\xi^{ij}\xi^{kl})F_{\ k}^{\alpha}F_{\ l}^{\beta}V_{\alpha}V_{\beta}\right]}{\sqrt{1-\zeta^{\alpha\beta}
V_{\alpha}V_{\beta}}}\right\}=0$$
We denote
$$D^{\alpha}=\frac{-\sqrt{\xi}\zeta^{\alpha\beta}V_{\beta}}{\sqrt{1-\zeta^{\beta\gamma}
V_{\beta}V_{\gamma}}},\quad P_{i}=f^{\alpha}_{\ i}D_{\alpha},\quad h=\frac{\sqrt{\xi}}{\sqrt{1-\zeta^{\alpha\beta}
V_{\alpha}V_{\beta}}}$$
Then we have
$$V^{\alpha}=\frac{-D^{\alpha}-F^{\alpha}_{\ i}P^{i}}{\sqrt{\xi +|D|^2 + |P|^2 }},\quad h=\sqrt{\xi +|D|^2 + |P|^2 }$$
Therefore, the equation can be rewritten as
\begin{equation}\label{eq:poly-1}
\partial_{t}P_{i} + \partial_{j}\left(\frac{P_iP_j-\xi\xi^{ij}}{h}\right)=0
\end{equation}
The equation for $X^0=t$ reads,
$$-\partial_{t}\left(\frac{\sqrt{\xi}}{\sqrt{1-\zeta^{\alpha\beta}
V_{\alpha}V_{\beta}}}\right) + \partial_{j}\left(\frac{\sqrt{\xi}\zeta^{\alpha\beta}
V_{\alpha}F_{\beta j}}{\sqrt{1-\zeta^{\alpha\beta}
V_{\alpha}V_{\beta}}}\right)=0$$
which can be rewritten by using our new notations as
\begin{equation}\label{eq:poly-2}
\partial_t h + \partial_j P_j =0
\end{equation}
The equation for $X^{p+\alpha}$, $\alpha=1,\ldots,m$, reads,
\begin{multline*}
- \partial_{t} \left(\frac{\sqrt{\xi}(V_{\alpha}-\zeta^{\beta\gamma}F_{\alpha}^{\ j}F_{\beta j}V_{\gamma})}{\sqrt{1-\zeta^{\alpha\beta}
V_{\alpha}V_{\beta}}}\right) + \partial_j\left(\frac{\sqrt{\xi}(\zeta^{\beta\gamma}F_{\beta}^{\ j}V_{\gamma}V_{\alpha} - F_{\alpha i}\xi^{ik}F_{\ k}^{\beta}V_{\beta}\xi^{jl}F_{\ l}^{\gamma}V_{\gamma})}{\sqrt{1-\zeta^{\alpha\beta}
V_{\alpha}V_{\beta}}}\right) \\ +\partial_j\left(\sqrt{\xi}\sqrt{1-\zeta^{\alpha\beta}
V_{\alpha}V_{\beta}}\xi^{ij}F_{\alpha i}\right)=0
\end{multline*}
which can be rewritten as
\begin{equation}\label{eq:poly-3}
\partial_{t} D_{\alpha} +\partial_j \left(\frac{D_{\alpha}P_{j}+\xi\xi^{ij}F_{\alpha i}}{h}\right)=0
\end{equation}
At last, since $\partial_{t}F_{\alpha i}=\partial_{i}V_{\alpha}$, $ \partial_{i}F_{\alpha j}=\partial_{j}F_{\alpha i}$, we have
\begin{equation}\label{eq:poly-4}
\partial_t F_{\alpha i} + \partial_i\left(\frac{D_{\alpha}+F_{\alpha j}P^{j}}{h}\right)=0,\ \ \  \partial_{i}F_{\alpha j}=\partial_{j}F_{\alpha i}
\end{equation}
\eqref{eq:poly-1}-\eqref{eq:poly-4} are just the equations that we propose.


\section{Appendix B}\label{apdx-b}
First, let's prove the equation \eqref{eq:so4}. Quite directly, we have
$$
\partial_t h=\frac{1}{2h}\left(2D_{\alpha}\partial_t D_{\alpha}+2P_i\partial_t P_i + \frac{\partial\xi(F)}{\partial F_{\alpha i}}\partial_t F_{\alpha i}\right)
$$
$$
=\frac{D_{\alpha}\partial_t D_{\alpha}+P_i\partial_t (F_{\alpha i}D_{\alpha}) + \xi'(F)_{\alpha i}\partial_t F_{\alpha i}}{h}
$$
$$
=\left(\frac{D_{\alpha} + F_{\alpha i}P_i}{h}\right)\partial_t D_{\alpha} + \left(\frac{D_{\alpha}P_i + \xi'(F)_{\alpha i}}{h}\right)\partial_t F_{\alpha i}
$$
$$
=-\left(\frac{D_{\alpha} + F_{\alpha j}P_j}{h}\right)\partial_i\left(\frac{D_{\alpha}P_i+\xi'(F)_{\alpha i}}{h}\right)-\left(\frac{D_{\alpha}P_i + \xi'(F)_{\alpha i}}{h}\right)\partial_i\left(\frac{D_{\alpha}+F_{\alpha j}P_j}{h}\right)
$$
$$
=-\partial_i\left(\frac{(D_{\alpha} + F_{\alpha j}P_j)(D_{\alpha}P_i + \xi'(F)_{\alpha i})}{h^2}\right)
$$
Now, since
$$
\xi'(F)_{\alpha i}(D_{\alpha} + F_{\alpha j}P_j)=\xi(F)(I+F^TF)^{-1}_{ik}F_{\alpha k}(D_{\alpha} + F_{\alpha j}P_j)
$$
$$
=\xi(F)(I+F^TF)^{-1}_{ik}(P_k + F_{\alpha k}F_{\alpha j}P_j)
$$
$$
=\xi(F)(I+F^TF)^{-1}_{ik}(I+F^TF)_{kj}P_j=\xi(F)\delta_{ij}P_j=\xi(F)P_i
$$
So we have
$$
\partial_t h=-\partial_i\left(\frac{(D^2+P^2+\xi(F))P_i}{h^2}\right)=-\partial_i P_i
$$
Now, let's look at the equation for $P_i=F_{\alpha i}D_{\alpha}$. We have
$$
\partial_t P_i= \partial_t (F_{\alpha i}D_{\alpha})=D_{\alpha}\partial_t F_{\alpha i} + F_{\alpha i}\partial_t D_{\alpha}
$$
The first term
$$
D_{\alpha}\partial_t F_{\alpha i}=-D_{\alpha}\partial_i\left(\frac{D_{\alpha}+F_{\alpha j}P_j}{h}\right)
$$
$$
=-\partial_i\left(\frac{D_{\alpha}^2+ D_{\alpha}F_{\alpha j}P_j}{h}\right) + \frac{D_{\alpha}\partial_iD_{\alpha}+P_j F_{\alpha j}\partial_iD_{\alpha}}{h}
$$
$$
=-\partial_i\left(\frac{D^2+P^2}{h}\right) + \frac{D_{\alpha}\partial_i D_{\alpha}+P_j\partial_i (F_{\alpha j}D_{\alpha}) + \xi'(F)_{\alpha j}\partial_i F_{\alpha j}}{h} -\frac{P_jD_{\alpha}\partial_iF_{\alpha j}+\xi'(F)_{\alpha j}\partial_i F_{\alpha j}}{h}
$$
$$
=-\partial_i\left(\frac{h^2-\xi(F)}{h}\right)+\partial_i h -\left(\frac{D_{\alpha}P_j+\xi'(F)_{\alpha j}}{h}\right)\partial_iF_{\alpha j}
$$
$$
=\partial_i\left(\frac{\xi(F)}{h}\right)-\left(\frac{D_{\alpha}P_j+\xi'(F)_{\alpha j}}{h}\right)\partial_iF_{\alpha j}
$$
The second term
$$
F_{\alpha i}\partial_t D_{\alpha}=-F_{\alpha i}\partial_j\left(\frac{D_{\alpha}P_j+\xi'(F)_{\alpha j}}{h}\right)
$$
$$
=-\partial_j\left(\frac{(F_{\alpha i}D_{\alpha})P_j+F_{\alpha i}\xi'(F)_{\alpha j}}{h}\right)+ \left(\frac{D_{\alpha}P_j+\xi'(F)_{\alpha j}}{h}\right)\partial_jF_{\alpha i}
$$
$$
=-\partial_j\left(\frac{P_iP_j+F_{\alpha i}\xi'(F)_{\alpha j}}{h}\right)+ \left(\frac{D_{\alpha}P_j+\xi'(F)_{\alpha j}}{h}\right)\partial_iF_{\alpha j}
$$
So we have
$$
\partial_t P_i=-\partial_j\left(\frac{P_iP_j+F_{\alpha i}\xi'(F)_{\alpha j}-\xi(F)\delta_{ij}}{h}\right)
$$
Now, since
$$
(I+F^TF)_{ik}^{-1}(\delta_{jk}+F_{\alpha j}F_{\alpha k})=\delta_{ij}
$$
then we have
$$
F_{\alpha i}\xi'(F)_{\alpha j}-\xi(F)\delta_{ij}=\xi(F)((I+F^TF)_{ik}^{-1}F_{\alpha j}F_{\alpha k}-\delta_{ij})=-\xi(F)(I+F^TF)_{ij}^{-1}
$$
So we get
$$\partial_tP_i + \partial_j\left(\frac{P_i P_j}{h}-\frac{\xi(F)(I+F^T F)^{-1}_{ij}}{h}\right)=0$$


\section{Appendix: C}\label{apdx-c}

We have that
$$
\partial_t S=\frac{D_{\alpha}\partial_t D_{\alpha} + P_i\partial_t P_i + M_{A,I}\partial_t M_{A,I}}{h}-\frac{1+D_{\alpha}^2 + P_i^2 + M_{A,I}^2}{2h^2}\partial_t h
$$
Let's look at the first term,
$$
\frac{D_{\alpha}\partial_t D_{\alpha}}{h}=-\frac{D_{\alpha}}{h}\partial_j\left(\frac{D_{\alpha}P_j}{h}\right)  -\frac{D_{\alpha}}{h} \sum_{A,I,i\atop\alpha\in A,i\in I}(-1)^{O_A(\alpha)+O_I(i)}\partial_i\left(\frac{M_{A,I}M_{A\setminus \{\alpha\},I\setminus\{i\} }}{h}\right)
$$
(since \eqref{eq:sn5}, we have $\sum_{i\in I}(-1)^{O_A(\alpha)+O_I(i)}\partial_iM_{A\setminus \{\alpha\},I\setminus\{i\}}=0$, )
$$
=-\frac{D_{\alpha}^2}{h^2}\partial_j P_j -P_j\partial_j\left(\frac{D_{\alpha}^2}{2h^2}\right)-\frac{D_{\alpha}M_{A\setminus \{\alpha\},I\setminus\{i\}}}{h} \sum_{A,I,i\atop\alpha\in A,i\in I}(-1)^{O_A(\alpha)+O_I(i)}\partial_i\left(\frac{M_{A,I} }{h}\right)
$$
For the second term,
$$
\frac{P_i\partial_t P_i}{h}=L_1+L_2
$$
where
$$L_1=-\frac{P_i}{h}\sum_{A,I,j\atop j\in I,i\notin I\setminus\{j\}}(-1)^{O_{I}(j)+O_{I\setminus\{j\}}(i)}\partial_j\left(\frac{M_{A,(I\setminus\{j\})\bigcup\{i\}}M_{A,I}}{h}\right)
$$
$$
L_2=-\frac{P_i}{h}\partial_j\left(\frac{P_i P_j}{h}\right) +\frac{P_j}{h}\partial_j\left(\frac{1+\sum_{A,I}M_{A,I}^2}{h}\right)
$$
Now let's first prove the following equality,
$$
\partial_i M_{A,I}=\sum_{j\in I}\mathbbm{1}_{\{i\notin I\setminus\{j\}\}}(-1)^{O_I(j)+O_{I\setminus\{j\}}(i)}\partial_{j}M_{A,(I\setminus\{j\})\bigcup\{i\}}
$$
In fact, since $\mathbbm{1}_{\{j\in I,i\notin I\setminus\{j\}\}}=\mathbbm{1}_{\{j\in I,i\notin I\}}+\mathbbm{1}_{\{j\in I,i=j\}}$, we have
$${\rm Right\ }= \mathbbm{1}_{\{i\notin I\}}\sum_{j\in I}(-1)^{O_I(j)+O_{I\setminus\{j\}}(i)}\partial_{j}M_{A,(I\bigcup\{i\})\setminus\{j\}}+ \mathbbm{1}_{\{i\in I\}}\partial_i M_{A,I}$$
For $i\neq j$, we can check that
$$O_I(j)+O_{I\setminus\{j\}}(i) \equiv  O_I(i)+O_{I\bigcup\{i\}}(j) + 1\;\;({\rm mod\ }2)$$
So the right hand side
$${\rm Right\ }= -\mathbbm{1}_{\{i\notin I\}}\sum_{j\in I}(-1)^{O_I(i)+O_{I\bigcup\{i\}}(j)}\partial_{j}M_{A,(I\bigcup\{i\})\setminus\{j\}}+ \mathbbm{1}_{\{i\in I\}}\partial_i M_{A,I}$$
$$=-\mathbbm{1}_{\{i\notin I\}}\sum_{j\in I\bigcup\{i\}}(-1)^{O_I(i)+O_{I\bigcup\{i\}}(j)}\partial_{j}M_{A,(I\bigcup\{i\})\setminus\{j\}}+ \big(\mathbbm{1}_{\{i\notin I\}}+\mathbbm{1}_{\{i\in I\}}\big)\partial_i M_{A,I}$$
Because of \eqref{eq:sn5}, we finally get
$$\sum_{j\in I}\mathbbm{1}_{\{i\notin I\setminus\{j\}\}}(-1)^{O_I(j)+O_{I\setminus\{j\}}(i)}\partial_{j}M_{A,(I\setminus\{j\})\bigcup\{i\}}=\partial_i M_{A,I}$$
So we have
$$L_1=-\sum_{A,I}\frac{P_i M_{A,I}}{h^2}\partial_i M_{A,I}-\frac{P_i M_{A,(I\setminus\{j\})\bigcup\{i\}}}{h}\sum_{A,I,j\atop j\in I,i\notin I\setminus\{j\}}(-1)^{O_{I}(j)+O_{I\setminus\{j\}}(i)}\partial_j\left(\frac{M_{A,I}}{h}\right)$$
For $L_2$, we have
$$L_2= -\frac{P_i^2}{h^2}\partial_j P_j - P_j\partial_j\left(\frac{P_i^2}{2h^2}\right) + \partial_j\left(\frac{P_j(1+M_{A,I}^2)}{h^2}\right)- \frac{1+M_{A,I}^2}{h}\partial_j\left(\frac{P_j}{h}\right)$$
Since
$$- \frac{1+M_{A,I}^2}{h}\partial_j\left(\frac{P_j}{h}\right)=- \frac{1+M_{A,I}^2}{h^2}\partial_jP_j-P_j\partial_j\left(\frac{1+M_{A,I}^2}{2h^2}\right)+ \frac{P_j M_{A,I}}{h^2}\partial_j M_{A,I}$$
so we have
$$
\frac{P_i\partial_tP_i}{h}=-\frac{P_i M_{A,(I\setminus\{j\})\bigcup\{i\}}}{h}\sum_{A,I,j\atop j\in I,i\notin I\setminus\{j\}}(-1)^{O_{I}(j)+O_{I\setminus\{j\}}(i)}\partial_j\left(\frac{M_{A,I}}{h}\right) $$
$$- \frac{1+P_i^2+ M_{A,I}^2}{h^2}\partial_jP_j-P_j\partial_j\left(\frac{1+P_i^2+M_{A,I}^2}{2h^2}\right)+ \partial_j\left(\frac{P_j(1+M_{A,I}^2)}{h^2}\right) $$
Therefore, we have
$$\frac{D_{\alpha}\partial_t D_{\alpha} + P_i\partial_t P_i + M_{A,I}\partial_t M_{A,I}}{h}=-\frac{2S}{h}\partial_jP_j-P_j\partial_j\left(\frac{S}{h}\right)+ L_3$$
where
$$L_3=\partial_j\left(\frac{P_j(1+M_{A,I}^2)}{h^2}\right)-\sum_{A,I,j\atop j\in I,i\notin I\setminus\{j\}}(-1)^{O_{I}(j)+O_{I\setminus\{j\}}(i)}\partial_j\left(\frac{P_i M_{A,(I\setminus\{j\})\bigcup\{i\}}M_{A,I}}{h^2}\right)$$
$$-\sum_{A,I,i\atop\alpha\in A,i\in I}(-1)^{O_A(\alpha)+O_I(i)}\partial_i\left(\frac{D_{\alpha}M_{A\setminus \{\alpha\},I\setminus\{i\}}M_{A,I} }{h^2}\right)$$
So we have
$$\partial_t S= -\frac{2S}{h}\partial_jP_j-P_j\partial_j\left(\frac{S}{h}\right)+ L_3 -\frac{S}{h}\partial_t h=-\partial_j\left(\frac{SP_j}{h}\right)+L_3$$
which completes the proof.


\section{Appendix D}\label{apdx-d}

\subsection*{Equation for $\omega_i$}

First, let's compute $\partial_t \omega_i$, by definition,
$$
\partial_t \omega_i =\partial_t \tau v_i +\tau \partial_t v_i - \partial_t m_{\alpha i}d_{\alpha}-m_{\alpha i}\partial_td_{\alpha}
$$
The first two terms are,
$$
\partial_t \tau v_i +\tau \partial_t v_i=v_i(\tau\partial_jv_j-v_j\partial_j\tau) +\tau\Big(
\sum_{A,I}m_{A,I}\partial_{i}m_{A,I} - v_j\partial_j v_i + \tau\partial_i\tau\Big)+\Sigma_1
$$
$$
=(\tau v_i)\partial_jv_j - v_j\partial_j(\tau v_i) +\frac{\tau}{2}\partial_i(\tau^2+m^2_{A,I})+\Sigma_1
$$
where
$$
\Sigma_1=-\tau\sum_{A,I,j}\mathbbm{1}_{\{j\in I,i\notin I\setminus\{j\}\}}(-1)^{O_{I}(j)+O_{I\setminus\{j\}}(i)}m_{A,(I\setminus\{j\})\bigcup\{i\}}\partial_jm_{A,I}
$$
Here we use the equation for $m_{\alpha i}$:
$$
\partial_t m_{\alpha i} + v_j\partial_jm_{\alpha i} +  m_{\alpha j}\partial_{i}v_j  - m_{\alpha i}\partial_j v_j +   \tau\partial_{i}d_{\alpha} =0
$$
The last two terms are,
$$
- \partial_t m_{\alpha i}d_{\alpha}-m_{\alpha i}\partial_td_{\alpha}=d_{\alpha}(v_j\partial_j m_{\alpha i} + m_{\alpha j}\partial_i v_j-m_{\alpha i}\partial_j v_j +\tau\partial_i d_{\alpha})+m_{\alpha i}v_j\partial_j d_{\alpha} + \Sigma_2
$$
$$
=-(m_{\alpha i}d_{\alpha})\partial_j v_j + v_j\partial_j(m_{\alpha i}d_{\alpha}) + \frac{\tau}{2}\partial_i(d^2_{\alpha}+v^2_j)-(\tau v_j-m_{\alpha j}d_{\alpha})\partial_i v_j+\Sigma_2
$$
where
$$
\Sigma_2=\sum_{A,I,\alpha,j}\mathbbm{1}_{\{\alpha\in A,j\in I\}}(-1)^{O_A(\alpha)+O_I(j)}m_{\alpha i}m_{A\setminus \{\alpha\},I\setminus\{j\} }\partial_j m_{A,I}
$$
Now, we have
$$
\partial_t \omega_i=(\tau v_i-m_{\alpha i}d_{\alpha})\partial_j v_j -v_j\partial_j(\tau v_i-m_{\alpha i}d_{\alpha}) + \frac{\tau}{2}\partial_i(\tau^2+v_j^2+d_{\alpha}^2+m_{A,I}^2)
$$
$$
-(\tau v_j-m_{\alpha j}d_{\alpha})\partial_i v_j+\Sigma_1+\Sigma_2
$$
$$
=\omega_i\partial_j v_j-\omega_j\partial_i v_j-v_j\partial_j\omega_i +\tau\partial_i\lambda +\Sigma_1+\Sigma_2
$$
It is easy to check that
$$
\Sigma_1+\Sigma_2=\sum_{A,I,j}\mathbbm{1}_{\{j\in I\}}(-1)^{O_{I}(j)+O_{I\setminus\{j\}}(i)}\partial_j m_{A,I}\psi^{i}_{A,I\setminus\{j\}}
$$
So $\omega_i$ should satisfy the following equation
$$
\partial_t \omega_i=\omega_i\partial_j v_j-\omega_j\partial_i v_j-v_j\partial_j\omega_i +\tau\partial_i\lambda +\sum_{A,I,j}\mathbbm{1}_{\{j\in I\}}(-1)^{O_{I}(j)+O_{I\setminus\{j\}}(i)}\partial_j m_{A,I}\psi^{i}_{A,I\setminus\{j\}}
$$


\subsection*{Equation for $\lambda$}
Now let's compute $\partial_t\lambda$, we have,
$$
\partial_t\lambda=\tau\partial_t\tau + v_i\partial_t v_i + d_{\alpha}\partial_t d_{\alpha}  + m_{A,I}\partial_t m_{A,I}
$$
$$
=\tau(\tau\partial_jv_j-v_j\partial_j\tau) + v_i( m_{A,I}\partial_{i}m_{A,I} - v_j\partial_j v_i + \tau\partial_i\tau) + \Sigma_3
$$
$$
- d_{\alpha}v_j\partial_j d_{\alpha} +\Sigma_4 + m_{A,I}(m_{A,I}\partial_j v_j-v_j\partial_j m_{A,I}) +\Sigma_5 +\Sigma_6
$$
$$
=-\frac{v_j}{2}\partial_j(\tau^2+v_i^2+d_{\alpha}^2+m_{A,I}^2) +\tau\partial_i(\tau v_i) + m_{A,I}\partial_i(m_{A,I}v_i)
$$
$$
+ \Sigma_3+ \Sigma_4+ \Sigma_5+ \Sigma_6
$$
where
$$
\Sigma_3=-\sum_{A,I,i,j}\mathbbm{1}_{\{j\in I,i\notin I\setminus\{j\}\}}(-1)^{O_{I}(j)+O_{I\setminus\{j\}}(i)} m_{A,(I\setminus\{j\})\bigcup\{i\}}v_i\partial_jm_{A,I}
$$
$$
\Sigma_4=-\sum_{A,I,\alpha,i}\mathbbm{1}_{\{\alpha\in A,i\in I\}}(-1)^{O_A(\alpha)+O_I(i)}m_{A\setminus \{\alpha\},I\setminus\{i\} }d_{\alpha}\partial_im_{A,I}
$$
$$
\Sigma_5=-\sum_{A,I,\alpha,i}\mathbbm{1}_{\{\alpha\in A,i\in I\}}(-1)^{O_A(\alpha)+O_I(i)} m_{A\setminus\{\alpha\},I\setminus\{i\}} m_{A,I}\partial_{i}d_{\alpha}
$$
$$
\Sigma_6=-\sum_{A,I,i,j}\mathbbm{1}_{\{j\in I,i\notin I\setminus\{j\}\}}(-1)^{O_{I\setminus\{j\}}(i)+O_{I}(j)} m_{A,(I\setminus\{j\})\bigcup\{i\}}m_{A,I}\partial_{j}v_i
$$
It is easy to see that
$$
\Sigma_3+\Sigma_6=-\sum_{A,I,i,j}\mathbbm{1}_{\{j\in I,i\notin I\setminus\{j\}\}}(-1)^{O_{I}(j)+O_{I\setminus\{j\}}(i)} m_{A,(I\setminus\{j\})\bigcup\{i\}}\partial_j (m_{A,I}v_i)
$$
(since $\mathbbm{1}_{\{j\in I,i\notin I\setminus\{j\}\}}=\mathbbm{1}_{\{j\in I,i\notin I\}}+\mathbbm{1}_{\{j\in I,i=j\}}$)
$$
=-\sum_{A,I,i}\mathbbm{1}_{\{i\in I\}}m_{A,I}\partial_i(m_{A,I}v_i)-\sum_{A,I,i,j}\mathbbm{1}_{\{j\in I,i\notin I\}}(-1)^{O_{I}(j)+O_{I\setminus\{j\}}(i)} m_{A,(I\setminus\{j\})\bigcup\{i\}}\partial_j (m_{A,I}v_i)
$$
(since $\mathbbm{1}_{\{i\in I\}}=1-\mathbbm{1}_{\{i\notin I\}}$)
$$
=-m_{A,I}\partial_i(m_{A,I}v_i)+\sum_{A,I,i,j}\mathbbm{1}_{\{i\notin I,j=i\}}(-1)^{O_I(i)+O_{I\bigcup\{i\}}(j)}m_{A,(I\bigcup\{i\})\setminus\{j\}}\partial_j (m_{A,I}v_i)
$$
$$
-\sum_{A,I,i,j}\mathbbm{1}_{\{i\notin I,j\in I\}}(-1)^{O_{I}(j)+O_{I\setminus\{j\}}(i)} m_{A,(I\bigcup\{i\})\setminus\{j\}}\partial_j (m_{A,I}v_i)
$$
Now we can easily check that for any $i\notin I$ and $j\in I$,
$$O_I(i)+O_{I\bigcup\{i\}}(j)\equiv O_I(j)+O_{I\setminus\{j\}}(i)+1\;\;({\rm mod\ }2)$$
[We can prove this equality by discussing in the cases $i<j$ and $i>j$.]\\
Because $\mathbbm{1}_{\{i\notin I,j=i\}}+\mathbbm{1}_{\{i\notin I,j\in I\}}=\mathbbm{1}_{\{i\notin I,j\in I\bigcup\{i\}\}}$, we have
$$
\Sigma_3+\Sigma_6 + m_{A,I}\partial_i(m_{A,I}v_i)=\sum_{A,I,i,j}\mathbbm{1}_{\{i\notin I,j\in I\bigcup\{i\}\}}(-1)^{O_I(i)+O_{I\bigcup\{i\}}(j)}m_{A,(I\bigcup\{i\})\setminus\{j\}}\partial_j (m_{A,I}v_i)
$$
(let $I'=I\bigcup \{i\}$)
$$
=\sum_{A,|I'|\geq 2,i,j,}\mathbbm{1}_{\{i,j\in I'\}}(-1)^{O_{I'}(i)+O_{I'}(j)}m_{A,I'\setminus\{j\}}\partial_j (m_{A,I'\setminus\{i\}}v_i)
$$
( and, since $m_{A,I'\setminus\{i\}}v_i=\frac{m_{A,I'\setminus\{i\}}}{\tau}(\omega_i+m_{\alpha i}d_{\alpha})$)
$$
=\sum_{A,|I'|\geq 2,i,j,}\mathbbm{1}_{\{i,j\in I'\}}(-1)^{O_{I'}(i)+O_{I'}(j)}m_{A,I'\setminus\{j\}}\partial_j\left(\frac{m_{A,I'\setminus\{i\}}\omega_i+
m_{A,I'\setminus\{i\}}m_{\alpha i}d_{\alpha}}{\tau}\right)
$$

Also, we have
$$
\Sigma_4+\Sigma_5=-\sum_{A,I,\alpha,i}\mathbbm{1}_{\{\alpha\in A,i\in I\}}(-1)^{O_A(\alpha)+O_I(i)}m_{A\setminus \{\alpha\},I\setminus\{i\} }\partial_i(m_{A,I}d_{\alpha})
$$
(let $A'=A\setminus \{\alpha\}$)
$$
=-\tau\partial_i(m_{\alpha i}d_{\alpha})-\sum_{A',|I|\geq 2,\alpha,i}\mathbbm{1}_{\{\alpha\notin A',i\in I\}}(-1)^{O_{A'}(\alpha)+O_{I}(i)}m_{A',I\setminus\{i\}}\partial_i(m_{A'\bigcup\{\alpha\},I}d_{\alpha})
$$
Since $$\mathbbm{1}_{\{\alpha\notin A'\}}m_{A'\bigcup\{\alpha\},I}d_{\alpha}
$$
$$=\frac{1}{\tau}\Big(\sum_{j\in I}(-1)^{O_{A'}(\alpha)+O_{I}(j)}m_{A',I\setminus\{j\}}m_{\alpha j}-\varphi_{A',I}^{\alpha}\Big)d_{\alpha}$$
we have
$$
\Sigma_4+\Sigma_5=-\tau\partial_i(m_{\alpha i}d_{\alpha}) + \sum_{A',|I|\geq 2,\alpha,i}\mathbbm{1}_{\{i\in I\}}(-1)^{O_{A'}(\alpha)+O_{I}(i)}m_{A',I\setminus\{i\}}\partial_i\left(\frac{
\varphi_{A',I}^{\alpha}d_{\alpha}}{\tau}\right)
$$
$$
-\sum_{A',|I|\geq 2,\alpha,i,j}\mathbbm{1}_{\{i,j\in I\}}(-1)^{O_I(i)+O_I(j)}m_{A',I\setminus\{i\}}\partial_i\left(\frac{
m_{A',I\setminus\{j\}}m_{\alpha j}d_{\alpha}}{\tau}\right)
$$
We find that the last term is cancel when we add it up with $\Sigma_3+\Sigma_6$, so we have
$$
\partial_t\lambda=-v_j\partial_j\lambda + \tau\partial_i\omega_i +\sum_{A,|I'|\geq 2,i,j,}\mathbbm{1}_{\{i,j\in I'\}}(-1)^{O_{I'}(i)+O_{I'}(j)}m_{A,I'\setminus\{j\}}\partial_j\left(\frac{m_{A,I'\setminus\{i\}}\omega_i}{\tau}\right)
$$
$$
+ \sum_{A',|I|\geq 2,\alpha,i}\mathbbm{1}_{\{i\in I\}}(-1)^{O_{A'}(\alpha)+O_{I}(i)}m_{A',I\setminus\{i\}}\partial_i\left(\frac{
\varphi_{A',I}^{\alpha}d_{\alpha}}{\tau}\right)
$$


\subsection*{Equation for $\varphi^{\alpha}_{A,I}$}

Now let's find the equation for $\varphi^{\alpha}_{A,I}$. We only consider the case $|A|\geq 2$. We have
$$
\partial_t \varphi^{\alpha}_{A,I}=\partial_t\left(\sum_{i\in I}(-1)^{O_A(\alpha)+O_I(i)}m_{A,I\setminus\{i\}}m_{\alpha i} - \mathbbm{1}_{\{\alpha\notin A\}}\tau m_{A\bigcup\{\alpha\},I}\right)
$$
$$
=\sum_{i\in I}(-1)^{O_A(\alpha)+O_I(i)}m_{\alpha i}\big(m_{A,I\setminus\{i\}}\partial_jv_j-v_j\partial_j m_{A,I\setminus\{i\}}\big) +\Sigma_7+\Sigma_8
$$
$$
-\sum_{i\in I}(-1)^{O_A(\alpha)+O_I(i)}m_{A,I\setminus\{i\}}\big(v_j\partial_j m_{\alpha i} + m_{\alpha j}\partial_i v_j-m_{\alpha i}\partial_j v_j +\tau\partial_i d_{\alpha}\big)
$$
$$
-\mathbbm{1}_{\{\alpha\notin A\}}m_{A\bigcup\{\alpha\},I}(\tau\partial_jv_j-v_j\partial_j\tau)-\mathbbm{1}_{\{\alpha\notin A\}}\tau\big(m_{A\bigcup\{\alpha\},I}\partial_jv_j-v_j\partial_j m_{A\bigcup\{\alpha\},I}\big) +\Sigma_9+\Sigma_{10}
$$
$$
=2\varphi^{\alpha}_{A,I}\partial_j v_j-v_j\partial_j\varphi^{\alpha}_{A,I}-\sum_{i\in I}(-1)^{O_A(\alpha)+O_I(i)}m_{A,I\setminus\{i\}}\big(m_{\alpha j}\partial_i v_j +\tau\partial_i d_{\alpha}\big) +\Sigma_7+\Sigma_8+\Sigma_9+\Sigma_{10}
$$
where
$$
\Sigma_7=-\sum_{i,j,k}\mathbbm{1}_{\{i\neq j\in I,k\notin I\setminus\{i,j\}\}}(-1)^{O_{A}(\alpha)+O_I(i)+O_{I\setminus\{i\}}(j)+O_{I\setminus\{i,j\}}(k)} m_{A,(I\setminus\{i,j\})\bigcup\{k\}}m_{\alpha i}\partial_{j}v_k
$$
$$
\Sigma_8=-\sum_{\beta,i,j}\mathbbm{1}_{\{\beta\in A,i\neq j\in I\}}(-1)^{O_A(\alpha)+O_I(i)+O_A(\beta)+O_{I\setminus\{i\}}(j)} m_{A\setminus\{\beta\},I\setminus\{i,j\}} m_{\alpha i}\partial_{j}d_{\beta}
$$
$$
\Sigma_9=\sum_{j,k}\mathbbm{1}_{\{\alpha\notin A,j\in I,k\notin I\setminus\{j\}\}}(-1)^{O_{I\setminus\{j\}}(k)+O_{I}(j)} \tau m_{A\bigcup\{\alpha\},(I\setminus\{j\})\bigcup\{k\}}\partial_{j}v_k
$$
$$
\Sigma_{10}=\sum_{\beta,j}\mathbbm{1}_{\{\alpha\notin A,\beta\in A\bigcup\{\alpha\},j\in I\}}(-1)^{O_{A\bigcup\{\alpha\}}(\beta)+O_I(j)}\tau m_{(A\bigcup\{\alpha\})\setminus\{\beta\},I\setminus\{j\}} \partial_{j}d_{\beta}
$$
Now let's look at $\Sigma_7$, first, since
$$\mathbbm{1}_{\{i\neq j\in I,k\notin I\setminus\{i,j\}\}}=\mathbbm{1}_{\{j\in I,i\in I\setminus \{j\},k\notin I\setminus\{j\}\}}+\mathbbm{1}_{\{j\in I,i\in I\setminus \{j\},k=i\}}$$
we have
$$
\Sigma_7=-\sum_{i,j,k}\mathbbm{1}_{\{j\in I,i\in I\setminus \{j\},k\notin I\setminus\{j\}\}}(-1)^{O_{A}(\alpha)+O_I(i)+O_{I\setminus\{i\}}(j)+O_{I\setminus\{i,j\}}(k)} m_{A,(I\setminus\{i,j\})\bigcup\{k\}}m_{\alpha i}\partial_{j}v_k
$$
$$
+\sum_{j,k}\mathbbm{1}_{\{j\in I,k\in I\setminus \{j\}\}}(-1)^{O_A(\alpha)+O_I(j)}m_{A,I\setminus\{j\}}m_{\alpha k}\partial_j v_k
$$
Here we use the fact that, for $k\neq j$, $$O_I(k)+O_{I\setminus\{k\}}(j)\equiv O_I(j)+O_{I\setminus\{j\}}(k) +1\;\;({\rm mod\ }2).$$
Further more, for different number $i,j,k$, we have the following equality
$$O_I(i)+O_{I\setminus\{i\}}(j)+O_{I\setminus\{i,j\}}(k)\equiv O_{I\setminus\{j\}}(k)+O_{I}(j)+O_{(I\setminus\{j\})\bigcup\{k\}}(i)\;\;({\rm mod\ }2)$$
Then,
we have
$$
\Sigma_7=-\sum_{i,j,k}\mathbbm{1}_{\{j\in I,i\in I\setminus \{j\},k\notin I\setminus\{j\}\}}(-1)^{O_{A}(\alpha)+O_{I\setminus\{j\}}(k)+O_{I}(j)+O_{(I\setminus\{j\})\bigcup\{k\}}(i)} m_{A,(I\setminus\{i,j\})\bigcup\{k\}}m_{\alpha i}\partial_{j}v_k
$$
$$
+\sum_{j\in I,k}(-1)^{O_A(\alpha)+O_I(j)}m_{A,I\setminus\{j\}}m_{\alpha k}\partial_j v_k - \sum_{j,k}\mathbbm{1}_{\{j\in I,k\notin I\setminus \{j\}\}}(-1)^{O_A(\alpha)+O_I(j)}m_{A,I\setminus\{j\}}m_{\alpha k}\partial_j v_k
$$
(since $\mathbbm{1}_{\{j\in I,i\in I\setminus \{j\},k\notin I\setminus\{j\}\}} + \mathbbm{1}_{\{j\in I,k\notin I\setminus \{j\},i=k\}}=\mathbbm{1}_{\{j\in I,k\notin I\setminus \{j\},i\in (I\setminus \{j\})\bigcup\{k\}\}}$)
$$
=-\sum_{i,j,k}\mathbbm{1}_{\{j\in I,k\notin I\setminus\{j\},i\in (I\setminus \{j\})\bigcup\{k\}\}}(-1)^{O_{A}(\alpha)+O_{I\setminus\{j\}}(k)+O_{I}(j)+O_{(I\setminus\{j\})\bigcup\{k\}}(i)} m_{A,((I\setminus\{j\})\bigcup\{k\})\setminus\{i\}}m_{\alpha i}\partial_{j}v_k
$$
$$
+\sum_{j\in I,k}(-1)^{O_A(\alpha)+O_I(j)}m_{A,I\setminus\{j\}}m_{\alpha k}\partial_j v_k
$$
Together with $\Sigma_9$, we have
$$
\Sigma_7 +\Sigma_9 -\sum_{j\in I,k}(-1)^{O_A(\alpha)+O_I(j)}m_{A,I\setminus\{j\}}m_{\alpha k}\partial_j v_k
=-\sum_{j,k}\mathbbm{1}_{\{j\in I,k\notin I\setminus\{j\}\}}(-1)^{O_{I\setminus\{j\}}(k)+O_{I}(j)} \varphi_{A,(I\setminus\{j\})\bigcup\{k\}}^{\alpha}\partial_{j}v_k
$$
Now let's look at $\Sigma_{10}$. Since
$$\mathbbm{1}_{\{\alpha\notin A,\beta\in A\bigcup\{\alpha\},j\in I\}}=\mathbbm{1}_{\{\alpha\notin A,\beta\in A,j\in I\}}+\mathbbm{1}_{\{\alpha\notin A,\beta=\alpha,j\in I\}}$$
and, for any $\alpha\notin A$, $\beta\in A$,
$$
O_{A\setminus\{\beta\}}(\alpha)+O_{A\bigcup\{\alpha\}}(\beta)\equiv O_{A}(\alpha)+O_{A}(\beta) +1\;\;({\rm mod\ }2)
$$
then we have
$$
\Sigma_{10}=-\sum_{\beta,j}\mathbbm{1}_{\{\alpha\notin A,\beta\in A,j\in I\}}(-1)^{O_{A}(\alpha)+O_{A}(\beta)+O_{A\setminus\{\beta\}}(\alpha)+O_I(j)}\tau m_{(A\setminus\{\beta\})\bigcup\{\alpha\},I\setminus\{j\}} \partial_{j}d_{\beta}
$$
$$
+\sum_{j\in I}\mathbbm{1}_{\{\alpha\notin A\}}(-1)^{O_{A}(\alpha)+O_I(j)}\tau m_{A,I\setminus\{j\}} \partial_{j}d_{\alpha}
$$
Now because $\mathbbm{1}_{\{\alpha\notin A,\beta\in A,j\in I\}}=\mathbbm{1}_{\{\alpha\notin A\setminus\{\beta\},\beta\in A,j\in I\}}-\mathbbm{1}_{\{\alpha=\beta\in A,j\in I\}}$, we have
$$
\Sigma_{10}=-\sum_{\beta,j}\mathbbm{1}_{\{\alpha\notin A\setminus\{\beta\},\beta\in A,j\in I\}}(-1)^{O_{A}(\alpha)+O_{A}(\beta)+O_{A\setminus\{\beta\}}(\alpha)+O_I(j)}\tau m_{(A\setminus\{\beta\})\bigcup\{\alpha\},I\setminus\{j\}} \partial_{j}d_{\beta}
$$
$$
+\sum_{j\in I}\big(\mathbbm{1}_{\{\alpha\in A\}}+\mathbbm{1}_{\{\alpha\notin A\}}\big)(-1)^{O_{A}(\alpha)+O_I(j)}\tau m_{A,I\setminus\{j\}} \partial_{j}d_{\alpha}
$$
Since for any $i\neq j\in I$,
$$O_I(i)+O_{I\setminus\{i\}}(j)\equiv O_I(j)+O_{I\setminus\{j\}}(i) +1\;\;({\rm mod\ }2)$$
Then we have
$$
\Sigma_8+\Sigma_{10} -\sum_{j\in I}(-1)^{O_{A}(\alpha)+O_I(j)}\tau m_{A,I\setminus\{j\}} \partial_{j}d_{\alpha}
$$
$$
=\sum_{\beta,j}\mathbbm{1}_{\{\beta\in A,j\in I\}}(-1)^{O_{A}(\alpha)+O_{A}(\beta)+O_{A\setminus\{\beta\}}(\alpha)+O_I(j)}\varphi_{A\setminus\{\beta\},I\setminus\{j\}}^{\alpha} \partial_{j}d_{\beta}
$$
In summary, we have
$$
\partial_t \varphi^{\alpha}_{A,I} =2\varphi^{\alpha}_{A,I}\partial_j v_j-v_j\partial_j\varphi^{\alpha}_{A,I} +\sum_{\beta,j}\mathbbm{1}_{\{\beta\in A,j\in I\}}(-1)^{O_{A}(\alpha)+O_{A}(\beta)+O_{A\setminus\{\beta\}}(\alpha)+O_I(j)}\varphi_{A\setminus\{\beta\},I\setminus\{j\}}^{\alpha} \partial_{j}d_{\beta}
$$
$$
-\sum_{j,k}\mathbbm{1}_{\{j\in I,k\notin I\setminus\{j\}\}}(-1)^{O_{I\setminus\{j\}}(k)+O_{I}(j)} \varphi_{A,(I\setminus\{j\})\bigcup\{k\}}^{\alpha}\partial_{j}v_k
$$


\subsection*{Equation for $\psi_{A,I}^{i}$}

Now we compute $\psi_{A,I}^{i}$, and we only consider the case when $|I|\geq 2$. We have
$$
\partial_t\psi_{A,I}^{i}=\partial_t\Big(\sum_{\alpha\in A}(-1)^{O_A(\alpha)+O_I(i)}m_{A\setminus\{\alpha\},I}m_{\alpha i} - \mathbbm{1}_{\{i\notin I\}}\tau m_{A,I\bigcup\{i\}}\Big)
$$
$$
=\sum_{\alpha\in A}(-1)^{O_A(\alpha)+O_I(i)}m_{\alpha i}\big(m_{A\setminus\{\alpha\},I}\partial_jv_j-v_j\partial_j m_{A\setminus\{\alpha\},I}\big) +\Sigma_{11}+\Sigma_{12}
$$
$$
-\sum_{\alpha\in A}(-1)^{O_A(\alpha)+O_I(i)}m_{A\setminus\{\alpha\},I}\big(v_j\partial_j m_{\alpha i} + m_{\alpha j}\partial_i v_j-m_{\alpha i}\partial_j v_j +\tau\partial_i d_{\alpha}\big)
$$
$$
-\mathbbm{1}_{\{i\notin I\}}\tau\big(m_{A,I\bigcup\{i\}}\partial_jv_j-v_j\partial_j m_{A,I\bigcup\{i\}}\big) +\Sigma_{13}+\Sigma_{14}
$$
$$
-\mathbbm{1}_{\{i\notin I\}}m_{A,I\bigcup\{i\}}(\tau\partial_jv_j-v_j\partial_j\tau)
$$
$$
=2\psi_{A,I}^{i}\partial_j v_j-v_j\partial_j\psi_{A,I}^{i} +\Sigma_{11}+\Sigma_{12}+\Sigma_{13}+\Sigma_{14}
$$
$$
-\sum_{\alpha\in A}(-1)^{O_A(\alpha)+O_I(i)}m_{A\setminus\{\alpha\},I}\big(m_{\alpha j}\partial_i v_j +\tau\partial_i d_{\alpha}\big)
$$
where
$$
\Sigma_{11}=-\sum_{\alpha,j,k}\mathbbm{1}_{\{\alpha\in A,j\in I,k\notin I\setminus\{j\}\}}(-1)^{O_{A}(\alpha)+O_I(i)+O_{I}(j)+O_{I\setminus\{j\}}(k)} m_{A\setminus\{\alpha\},(I\setminus\{j\})\bigcup\{k\}}m_{\alpha i}\partial_{j}v_k
$$
$$
\Sigma_{12}=-\sum_{\alpha,\beta,j}\mathbbm{1}_{\{\alpha\neq\beta\in A,j\in I\}}(-1)^{O_A(\alpha)+O_I(i)+O_{A\setminus \{\alpha\}}(\beta)+O_{I}(j)} m_{A\setminus\{\alpha,\beta\},I\setminus\{j\}} m_{\alpha i}\partial_{j}d_{\beta}
$$
$$
\Sigma_{13}=\sum_{j,k}\mathbbm{1}_{\{i\notin I,j\in I\bigcup\{i\},k\notin (I\bigcup\{i\})\setminus\{j\}\}}(-1)^{O_{(I\bigcup\{i\})\setminus\{j\}}(k)+O_{I\bigcup\{i\}}(j)} \tau m_{A,((I\bigcup\{i\})\setminus\{j\})\bigcup\{k\}}\partial_{j}v_k
$$
$$
\Sigma_{14}=\sum_{\beta,j}\mathbbm{1}_{\{\beta\in A,i\notin I,j\in I\bigcup\{i\}\}}(-1)^{O_A(\beta)+O_{I\bigcup\{i\}}(j)}\tau m_{A\setminus\{\beta\},(I\bigcup\{i\})\setminus\{j\}} \partial_{j}d_{\beta}
$$
First, let us consider the most complicated part $\Sigma_{13}$.
Since
$$\mathbbm{1}_{\{i\notin I,j\in I\bigcup\{i\},k\notin (I\bigcup\{i\})\setminus\{j\}\}}=\mathbbm{1}_{\{i\notin I,j\in I,k\notin (I\setminus\{j\})\bigcup\{i\}\}} + \mathbbm{1}_{\{i\notin I,j=i,k\notin I\}}$$
we have
$$\Sigma_{13}=\sum_{k}\mathbbm{1}_{\{i\notin I\}}(-1)^{O_I(k)+O_I(i)}\tau m_{A,I\bigcup\{k\}} + \Sigma'_{13}$$
where
$$
\Sigma'_{13}=\sum_{j,k}\mathbbm{1}_{\{i\notin I,j\in I,k\notin (I\setminus\{j\})\bigcup\{i\}\}}(-1)^{O_{(I\setminus\{j\})\bigcup\{i\}}(k)+O_{I\bigcup\{i\}}(j)} \tau m_{A,((I\setminus\{j\})\bigcup\{i\})\bigcup\{k\}}\partial_{j}v_k
$$
Now for $i\notin I$, $j\in I$, $k\notin I\setminus\{j\}$, $k\neq i$, we can prove that
$$O_{(I\setminus\{j\})\bigcup\{i\}}(k)+O_{I\bigcup\{i\}}(j) \equiv O_I(i)+O_{I}(j)+O_{I\setminus\{j\}}(k)+O_{(I\setminus\{j\})\bigcup\{k\}}(i)\;\;({\rm mod\ }2)$$
(We can show this equality by discussing in the 4 cases: $i<\min\{j,k\}$, $i>\max\{j,k\}$, $j<i<k$ and $k<i<j$.) So $\Sigma'_{13}$ can be written as
$$
\Sigma'_{13}=\sum_{j,k}\mathbbm{1}_{\{i\notin I,j\in I,k\notin (I\setminus\{j\})\bigcup\{i\}\}}(-1)^{S} \tau m_{A,((I\setminus\{j\})\bigcup\{i\})\bigcup\{k\}}\partial_{j}v_k
$$
where $S=O_I(i)+O_{I}(j)+O_{I\setminus\{j\}}(k)+O_{(I\setminus\{j\})\bigcup\{k\}}(i)$. Now we claim that
$$\mathbbm{1}_{\{i\notin I,j\in I,k\notin (I\setminus\{j\})\bigcup\{i\}\}}=\mathbbm{1}_{\{i\notin (I\setminus\{j\})\bigcup\{k\},j\in I,k\notin I\setminus\{j\}\}}-\mathbbm{1}_{\{i=j,j\in I,k\notin I\}}$$
(In fact, we can prove the equality step by step:
$$\mathbbm{1}_{\{i\notin I,j\in I,k\notin (I\setminus\{j\})\bigcup\{i\}\}} = \mathbbm{1}_{\{i\notin I,j\in I,k\notin I\setminus\{j\}\}} -\mathbbm{1}_{\{i\notin I,j\in I,k=i\}} $$
$$\mathbbm{1}_{\{i\notin I,j\in I,k\notin I\setminus\{j\}\}} =
\mathbbm{1}_{\{i\notin I\setminus\{j\},j\in I,k\notin I\setminus\{j\}\}} - \mathbbm{1}_{\{i=j,j\in I,k\notin I\setminus\{j\}\}}$$
$$\mathbbm{1}_{\{i\notin I\setminus\{j\},j\in I,k\notin I\setminus\{j\}\}}=\mathbbm{1}_{\{i\notin (I\setminus\{j\})\bigcup\{k\},j\in I,k\notin I\setminus\{j\}\}} + \mathbbm{1}_{\{i=k,j\in I,k\notin I\setminus\{j\}\}}$$
$$\mathbbm{1}_{\{i=j,j\in I,k\notin I\setminus\{j\}\}}=\mathbbm{1}_{\{i=j,j\in I,k\notin I\}}+\mathbbm{1}_{\{i=j=k,j\in I\}}$$
$$\mathbbm{1}_{\{i=k,j\in I,k\notin I\setminus\{j\}\}}=\mathbbm{1}_{\{i=k,j\in I,k\notin I\}}+\mathbbm{1}_{\{i=j=k,j\in I\}}$$
by adding up the above equalities we get the desired result.)
Then $\Sigma'_{13}$ can be written as
$$
\Sigma'_{13}=\sum_{j,k}\mathbbm{1}_{\{i\notin (I\setminus\{j\})\bigcup\{k\},j\in I,k\notin I\setminus\{j\}\}}(-1)^{S} \tau m_{A,((I\setminus\{j\})\bigcup\{k\})\bigcup\{i\}}\partial_{j}v_k
$$
$$
-\sum_{k}\mathbbm{1}_{\{i\in I\}}(-1)^{O_{I\setminus\{i\}}(k)+O_{I\bigcup\{k\}}(i)}\tau m_{A,I\bigcup\{k\}}
$$
We can easily verify that for $i\in I$, $k\notin I$,
$$O_{I\setminus\{i\}}(k)+O_{I\bigcup\{k\}}(i) \equiv O_{I}(k)+O_{I}(i)+1\;\;({\rm mod\ }2)$$
So we have
$$\Sigma_{13}=\sum_{k}(-1)^{O_I(k)+O_I(i)}\tau m_{A,I\bigcup\{k\}}$$
$$+\sum_{j,k}\mathbbm{1}_{\{i\notin (I\setminus\{j\})\bigcup\{k\},j\in I,k\notin I\setminus\{j\}\}}(-1)^{S} \tau m_{A,((I\setminus\{j\})\bigcup\{k\})\bigcup\{i\}}\partial_{j}v_k$$
Then we have
$$
\Sigma_{11}+\Sigma_{13}-\sum_{\alpha\in A,k}(-1)^{O_A(\alpha)+O_I(i)}m_{A\setminus\{\alpha\},I}m_{\alpha k}\partial_i v_k
$$
$$
=-\sum_{k}(-1)^{O_I(i)+O_I(k)}\psi_{A,I}^{k}\partial_iv_k-\sum_{j,k}\mathbbm{1}_{\{j\in I,k\notin I\setminus\{j\}\}}(-1)^{S} \psi_{A,(I\setminus\{j\})\bigcup\{k\}}^{i}\partial_{j}v_k
$$
Now let's look at $\Sigma_{14}$. Since
$$
\mathbbm{1}_{\{\beta\in A,i\notin I,j\in I\bigcup\{i\}\}}=\mathbbm{1}_{\{\beta\in A,i\notin I,j\in I\}}+\mathbbm{1}_{\{\beta\in A,i\notin I,j=i\}}
$$
we have
$$
\Sigma_{14}=\sum_{\beta\in A}\mathbbm{1}_{\{i\notin I\}}(-1)^{O_A(\beta)+O_{I}(i)}\tau m_{A\setminus\{\beta\},I} \partial_{i}d_{\beta} + \Sigma'_{14}
$$
where
$$
\Sigma'_{14}=\sum_{\beta,j}\mathbbm{1}_{\{\beta\in A,i\notin I,j\in I\}}(-1)^{O_A(\beta)+O_{I\bigcup\{i\}}(j)}\tau m_{A\setminus\{\beta\},(I\bigcup\{i\})\setminus\{j\}} \partial_{j}d_{\beta}
$$
We can check that for $i\notin I$, $j\in I$, we have
$$O_I(i)+O_{I\setminus\{j\}}(i) + O_I(j)+O_{I\bigcup\{i\}}(j) \equiv 1\;\;({\rm mod\ }2)$$
So $\Sigma'_{14}$ can be written as
$$
\Sigma'_{14}=-\sum_{\beta,j}\mathbbm{1}_{\{\beta\in A,i\notin I,j\in I\}}(-1)^{O_A(\beta)+O_I(i)+O_{I\setminus\{j\}}(i) + O_I(j)}\tau m_{A\setminus\{\beta\},(I\bigcup\{i\})\setminus\{j\}} \partial_{j}d_{\beta}
$$
Because
$$\mathbbm{1}_{\{\beta\in A,i\notin I,j\in I\}}=\mathbbm{1}_{\{\beta\in A,i\notin I\setminus\{j\},j\in I\}}-\mathbbm{1}_{\{\beta\in A,i=j,j\in I\}}$$
Then
$$
\Sigma'_{14}=-\sum_{\beta,j}\mathbbm{1}_{\{\beta\in A,i\notin I\setminus\{j\},j\in I\}}(-1)^{O_A(\beta)+O_I(i)+O_{I\setminus\{j\}}(i) + O_I(j)}\tau m_{A\setminus\{\beta\},(I\bigcup\{i\})\setminus\{j\}} \partial_{j}d_{\beta}
$$
$$
+\sum_{\beta\in A}\mathbbm{1}_{\{i\in I\}}(-1)^{O_A(\beta)+O_{I}(i)}\tau m_{A\setminus\{\beta\},I} \partial_{i}d_{\beta}
$$
Since for $\alpha\neq\beta\in A$,
$$O_A(\alpha)+O_{A\setminus\{\alpha\}}(\beta) \equiv  O_A(\beta)+O_{A\setminus\{\beta\}}(\alpha) + 1\;\;({\rm mod\ }2)$$
We finally have
$$
\Sigma_{12}+\Sigma_{14}=\sum_{\beta\in A,j\in I}(-1)^{O_A(\beta) + O_I(j) +O_I(i)+O_{I\setminus\{j\}}(i)}\psi_{A\setminus\{\beta\},I\setminus\{j\}}^{i}
\partial_jd_{\beta}
$$
$$
+\sum_{\beta\in A}(-1)^{O_A(\beta)+O_{I}(i)}\tau m_{A\setminus\{\beta\},I} \partial_{i}d_{\beta}
$$
In summary, we have
$$
\partial_t\psi_{A,I}^{i}=2\psi_{A,I}^{i}\partial_j v_j-v_j\partial_j\psi_{A,I}^{i} -\sum_{k}(-1)^{O_I(i)+O_I(k)}\psi_{A,I}^{k}\partial_iv_k
$$
$$
+ \sum_{\beta\in A,j\in I}(-1)^{O_A(\beta) + O_I(j) +O_I(i)+O_{I\setminus\{j\}}(i)}\psi_{A\setminus\{\beta\},I\setminus\{j\}}^{i}\partial_jd_{\beta}
$$
$$
-\sum_{j,k}\mathbbm{1}_{\{j\in I,k\notin I\setminus\{j\}\}}(-1)^{O_I(i)+O_{I}(j)+O_{I\setminus\{j\}}(k)+O_{(I\setminus\{j\})\bigcup\{k\}}(i)} \psi_{A,(I\setminus\{j\})\bigcup\{k\}}^{i}\partial_{j}v_k
$$

\end{document}